\documentclass[a4paper]{amsart}

\usepackage{amsfonts, amsmath, amssymb, amsthm, epsfig}
\usepackage{thmtools} %
\usepackage[british]{babel}
\usepackage[T1]{fontenc}
\usepackage{standalone,subcaption}
\usepackage[pagebackref=true, hidelinks]{hyperref}
\usepackage[alphabetic, nobysame, backrefs]{amsrefs}
\usepackage[framemethod=TikZ]{mdframed}
\usepackage{todonotes}
\usepackage{color}
\usepackage{lipsum}
\usepackage{tikz}
\usepackage{tikz-3dplot}
\tdplotsetmaincoords{20}{110}
\usepackage[capitalize]{cleveref}

\input{custom-commands}

\usetikzlibrary{calc}

\usepackage{array}
\newcolumntype{C}{>{$}c<{$}} %
	\newcolumntype{R}{>{$}r<{$}} %

\usetikzlibrary{decorations.markings}
\usetikzlibrary{arrows}
\usetikzlibrary{arrows.meta, positioning}
\usetikzlibrary{patterns}

\calclayout

\NewDocumentCommand{\Th}{O{\alpha} m}{\operatorname{Th}_{#1}(#2)}
\NewDocumentCommand{\enlargement}{O{\alpha} m}{\operatorname{e}_{#1}(#2)}
\newcommand{\Out}{O}
\newcommand{\In}{I}
\newcommand{\Z}{\mathbb{Z}}
\newcommand{\vcd}{\mathrm{vcd}}
\newcommand{\cd}{\mathrm{cd}}
\newcommand{\RACG}{right-angled Coxeter group\xspace}
\newcommand{\RACGs}{right-angled Coxeter groups\xspace}

\newcommand{\fns}{flag-no-square\xspace}%
\newcommand{\cdim}{\operatorname{Cdim}}

\newcommand{\scx}{M} %
\newcommand{\varscx}{N} %
\newcommand{\thickmap}[1][\scx]{\psi} %
\newcommand{\thicksize}{h}

\author{Giovanni Italiano}
\address{\parbox{\linewidth}{Mathematical Institute, University of Oxford, \\Andrew Wiles Building,
		Woodstock Road, OX2 6GG Oxford, UK}\vspace{1.5pt}}
\email{italiano at maths dot ox dot ac dot uk}

\author{Matteo Migliorini}
\address{Karlsruher Institut für Technologie, Englerstraße 2, 76131 Karlsruhe}
\email{matteo dot migliorini at kit dot edu}

\author{Andrew Ng}
\address{Mathematisches Institut, Universität Bonn, Endenicher Allee 60, 53115 Bonn, Germany}
\email{\href{mailto:clan@math.uni-bonn.de}{clan@math.uni-bonn.de}}

\title{Improved algebraic fibrations of high-dimensional hyperbolic groups}

\subjclass{20F67 (Primary), 20F65, 20F55, 57M07, 30L10 (Secondary)}

\begin{document}

\begin{abstract}
	For every $d \geq 3$, we construct infinitely many quasi-isometry classes of hyperbolic groups $G$ of cohomological dimension $d$ that algebraically fibre with finitely presented kernel. All our groups arise as finite-index subgroups of right-angled Coxeter groups. In many cases, the $L^2$-Betti numbers of the groups $G$ provide obstructions to higher finiteness properties of the kernel. Our groups therefore expand the list of subgroups of hyperbolic groups with exotic finiteness properties.

\end{abstract}

\maketitle

\section{Introduction}

A group $G$ is said to \newterm{algebraically fibre} if it admits a surjective map onto $\Z$ with finitely generated kernel. This definition is motivated by the world of $3$-manifolds, where a celebrated theorem of Stallings \cite{Stallings} establishes that, for a compact irreducible $3$-manifold, an algebraic fibration of its fundamental group is always induced by a topological fibration of the manifold. Agol's virtual fibring criteria \cite{Agol2008}, that he used to prove that all finite volume hyperbolic $3$-manifolds virtually fibre, makes crucial use of this. In view of this, the next natural question is whether all odd dimensional, finite volume hyperbolic manifolds virtually fibre.

In higher dimensions, stronger hypotheses are needed to upgrade algebraic fibring to topological fibring as, in general, a topological fibration induces a map on fundamental groups where the kernel is of type $F$. Finding examples of higher dimensional hyperbolic manifolds whose fundamental groups algebraically fibre with intermediate finiteness properties has been a topic of interest of late \cites{BattistaMartelli, IMM48, IM6}. The ultimate goal, however, is to obtain examples where the manifold topologically fibres. In his thesis \cite{Farrell}, Farrell proves a theorem, which holds in dimensions $d\geq 6$, in a similar spirit to Stallings' theorem, that says that an algebraic fibring of a $d$-manifold group is induced by a topological fibring if and only if the kernel is of type $F$ and certain $K$-theoretic obstructions vanish. Following Farrell's result, one ingredient to produce topological fibrings would be to find algebraically fibrings where the finiteness properties of the kernel can be improved.%

Agol's virtual fibring criterion crucially makes use of the notion of a RFRS group. We will not need the definition of a RFRS group, but it suffices to know that all compact special groups in the sense of Haglund and Wise \cite{HaglundWise2008} are RFRS. It is known that arithmetic hyperbolic manifolds of first type have fundamental groups which are cubulated \cite{BergeronWise}, hence virtually compact special \cite{Agol_haken} and therefore virtually RFRS. Wise conjectured that all finite volume hyperbolic manifolds are cubulated \cite[Section 17.b]{Wise_invitation}. In view of this, a lot of research has focussed on the special case of fibring manifolds whose fundamental group is RFRS.

Let $M$ be a closed hyperbolic odd-dimensional manifold and $G=\pi_1(M)$. To use Farrell's Theorem, one crucially needs to find a fibration $G \twoheadrightarrow \Z$ where the kernel is type $F$. One promising approach to finding a suitable fibration of a RFRS group is the following strategy, which is discussed in more detail in Kielak's companion notes to his ICM talk this year \cite{KielakICM}.
\begin{enumerate}
	\item Finding an algebraic fibration $G \twoheadrightarrow \Z$;
	\item Proving that the kernel is finitely presented;
	\item Proving that the kernel is of type $FP_n(\Z)$ for every $n$.
	\item Deducing that the kernel has actually property $F_\infty$, as a finitely presented group with property $FP_n(\Z)$ has also property $F_n$.
\end{enumerate}

In \cite{K20} Kielak showed the equivalence between algebraic fibring of an infinite RFRS group and the homological condition of vanishing of its first $\ell^2$-Betti number $b_1^{(2)}(G)$. This opened up the possibility of bringing homological algebra to bear on this problem, and indeed Fisher, building upon Kielak's Theorem, proved that, for any field $\mathbb{K}$, a RFRS group virtually algebraically fibres with kernel of type $FP_n(\mathbb{K})$ if and only if the first $n$ $\ell^2$-Betti numbers with coefficients in $\mathbb{K}$ vanish \cite{F24}.

Kielak's theorem provides techniques to prove $(1)$, and Fisher's theorem allows us to venture quite close to $(3)$. However, there is a lack of general techniques to approach $(2)$, partly because it appears to be quite resistant to purely homological methods. It is therefore interesting to understand more broadly fibrations of hyperbolic RFRS groups with the goal of bringing these techniques back to the setting of hyperbolic manifolds. \footnote{When we speak about hyperbolic manifolds, we mean manifolds that admit a metric of constant sectional curvature $-1$, while by hyperbolic group we mean word hyperbolic group. It should be clear from context which definition is meant and we ask the reader's understanding in using the same word in two senses.}

Recently Lafont, Minemyer, Sorcar, Stover, and Wells \cite{LMSSW} constructed explicit examples of hyperbolic right-angled Coxeter groups that virtually algebraically fibre, in each virtual cohomological dimension $d\geq 2$. Their result relies on a recursive construction, which is an adaptation of a thickening due to Osajda \cite{Osajda} to access high cohomological dimensions, and on the combinatorial game of Jankiewicz, Norin and Wise \cite{JNW} to ensure the virtual algebraic fibring. These groups are not fundamental groups of manifolds, but they are the first example in which algebraic fibring is shown to happen in hyperbolic groups of arbitrarily high dimension. %
\medskip

In this paper we strengthen the result of \cite{LMSSW} by proving the following.
\begin{maintheorem}\label{main:fp-fibring}
	For every $d\geq 3$, there exist infinitely many isomorphism classes of hyperbolic groups of cohomological dimension $d$ that algebraically fibre with finitely presented kernel.
\end{maintheorem}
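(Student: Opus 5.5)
We will realise the groups as finite-index subgroups of right-angled Coxeter groups $W_L$ with $L$ a flag simplicial complex, and detect fibrings with Bestvina--Brady Morse theory on the Davis complex. Gromov's criterion says $W_L$ is hyperbolic iff $L$ is \fns. The standard computation gives $\vcd(W_L) = d$ whenever $L$ is a \fns $(d-1)$-dimensional flag complex with $H^{d-1}(L)\neq 0$. Any torsion-free finite-index subgroup $G$ of such a $W_L$ is then hyperbolic of cohomological dimension $d$.

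For the fibring we use the Jankiewicz--Norin--Wise game. A \emph{legal state} is a map $\chi\colon V(L)\to\{\pm1\}$ (a choice of "in/out" for each generator). It induces a character $\phi\colon G\to\Z$ on a suitable finite-index subgroup $G\le W_L$, namely the one whose Davis-complex quotient is a cube complex admitting a circle-valued Morse function. Bestvina--Brady theory shows that $\ker\phi$ is finitely generated iff the ascending and descending links $L_\chi^{\pm}$ (the full subcomplexes spanned by the in-, respectively out-vertices, inside the relevant links) are connected. It is finitely presented iff they are moreover simply connected. This holds as long as we use the version of the Morse lemma controlling all links in the cube complex, i.e. the links of every simplex, not only of $L$. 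So the statement reduces to a purely combinatorial one: for each $d\ge3$, construct infinitely many \fns $(d-1)$-dimensional flag complexes $L$ with $H^{d-1}(L)\ne0$, each carrying a state $\chi$ for which every relevant ascending/descending full subcomplex, and the corresponding ones in links of simplices, is simply connected.

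To build the complexes we follow \cite{LMSSW}, which uses an Osajda-type thickening recursion to raise the dimension while keeping the complex \fns. The change is that we choose the thickening so that the in/out full subcomplexes are not merely connected but simply connected. First, for the base case $d=3$, we exhibit explicit \fns flag $2$-complexes, for instance triangulated surfaces or $2$-complexes with large girth, admitting a state whose in- and out-subcomplexes are trees or contractible. Second, in the inductive step, we pass from $L$ of dimension $d-2$ to a thickening $M$ of dimension $d-1$, built so that each vertex link of $M$ is a copy (or a cover) of the previous complex. The state is then extended so that its restriction to each link is the inductive state, and the in/out subcomplexes of $M$ deformation retract onto pieces of lower dimension. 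We obtain infinitely many examples by varying the base complex, for instance with growing numbers of vertices or growing $H^{d-1}$. Non-isomorphism, or even distinct quasi-isometry classes, can be certified by an invariant such as $\ell^2$-Betti numbers normalised by index, or simply by Euler characteristics of torsion-free finite-index subgroups after normalisation.

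The main obstacle is this inductive step. Osajda-type thickenings ensure the \fns condition by making links very "spread out", with large systole. That same largeness tends to create nontrivial $\pi_1$ in the in/out subcomplexes, since large girth means many cycles and no filling triangles. So we must balance two competing requirements: 6-largeness for hyperbolicity, and simple connectivity of the level-set links. The first thing to try is to make the in/out subcomplexes cones, or joins with a vertex, inside each link; these are contractible while the whole link keeps its large systole. To make this work we would check that the thickening construction is compatible with such coning locally.
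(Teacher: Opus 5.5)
There is a genuine gap. The step the whole theorem rests on is producing, for every $d\ge 4$, flag-no-square complexes of $\vcd = d$ on which \emph{all} ascending and descending links are simply connected. You never carry this out; your last paragraph leaves exactly this as something one ``would check''. (The case $d=3$ is already available from fibred closed hyperbolic $3$-manifolds, so the real content is $d\ge 4$.)

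The reduction around this step also needs repair, on three counts.
\begin{enumerate}
\item In the JNW game, simple connectivity is needed for the in/out subcomplexes of \emph{every} state in the orbit of the chosen system of moves, one state per vertex of the quotient cube complex, not for a single state $\chi$. You never specify the moves, yet they determine which subcomplexes occur. For instance, with one colour per vertex the orbit contains the all-IN state, whose ascending link is empty.
\item Your inductive step misdescribes Osajda's thickening. Thickening a cube complex that contains $k$-cubes produces $(2^k-1)$-simplices, and its vertex links are not copies of the previous complex. The $\vcd$ is controlled through Davis's formula, not by arranging $\dim L = d-1$.
\item The remedy you propose, coning off in/out subcomplexes by an apex vertex, is exactly what destroys the no-square condition. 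A new vertex $c$ joined to non-adjacent vertices $a,b$ that already share a neighbour $u$ creates the induced square $c,a,u,b$.
\end{enumerate}

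What is missing are the two ideas the paper uses.

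First, pass to the enlargement $\operatorname{e}_{\pi_h}(M)$ with $h\ge|M^{(0)}|$. Colour its vertices by the pairs $\{(v,w),(w,v)\}$ with $d(v,w)\ge 2$, and start from a balanced state. In every state of the orbit, the vertices of $M$ with no OUT-lift are pairwise adjacent, since otherwise both $(v,w)$ and $(w,v)$ would be IN. By flagness they span a simplex $\sigma$. Hence every ascending (and likewise descending) link is an enlargement of $M\setminus\sigma$, and so is homotopy equivalent to it. Finite presentability of the kernel therefore reduces to a condition on $M$ alone: $M\setminus\sigma$ must be simply connected for every, possibly empty, simplex $\sigma$.

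Second, enforce that condition by \emph{collar-coning}. Take full embedded cycles of length at least $5$ that normally generate the groups $\pi_1(M\setminus\sigma)$. Fill each with a copy of the disc obtained from the dual of a L\"obell polyhedron by deleting a vertex star. This disc has the following properties:
\begin{itemize}
\item unlike a plain cone, it keeps the complex flag and creates no squares;
\item deleting a simplex that meets the disc just retracts the disc away;
\item Mayer--Vietoris shows the $\vcd$ stays $d$, which is where $d\ge 3$ is used.
\end{itemize}
Without some device of this kind, your plan only recovers finitely generated kernels, i.e.\ the result of \cite{LMSSW}.
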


All our examples arise as finite index subgroups of right-angled Coxeter groups. Our method is to combine the strategy of \cite{LMSSW} with a novel coning construction, which we call \emph{collar-coning}, that allows us to modify simplicial complexes to be simply-connected while maintaining the hyperbolicity of the \RACG that they define.
The main strength of Theorem \ref{main:fp-fibring} is promoting the algebraic fibring by proving $(2)$ of the strategy outlined above for an infinite family of groups. To the best of our knowledge, before this paper there were only some sparse examples of hyperbolic groups of cohomological dimension $d>3$ for which $F_2$-algebraic fibring has been already proven, see \cites{IMM5, IM6, IMM48}. %
Our examples also contribute to the list of hyperbolic groups with finitely presented subgroups that have exotic finiteness properties \cites{BradyF2notF3, LodhaF2notF3, KropF2notF3, IM6, LMP8, LlosaPy}. Our construction has the advantage of providing an infinite family, with precise control over the dimension, as well as the nice combinatorial structure coming from the ambient \RACG.

Moreover, using the same technique as in recent work of Cashen, Dani, Schreve, and Stark \cite{CDSS}, we can distinguish infinitely many quasi-isometry classes of such groups in every fixed dimension. Our theorem is therefore upgraded to the following.

\begin{maintheorem}\label{main:fp-fibring-qi}
	For every $d\geq 3$, there exist infinitely many quasi-isometry classes of hyperbolic groups of cohomological dimension $d$ that algebraically fibre with finitely presented kernel.
\end{maintheorem}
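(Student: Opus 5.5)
The plan is to run the argument behind Theorem~\ref{main:fp-fibring} on an infinite family of flag complexes, and then use a quasi-isometry invariant to show that infinitely many members of the family are pairwise inequivalent. Fix $d\geq 3$. I would build, for each $n$, a flag simplicial complex $L_n$ of dimension $d-1$ with the following properties.
\begin{itemize}
\item $L_n$ is flag-no-square, so that $W_{L_n}$ is hyperbolic.
\item $L_n$ is simply connected. This is achieved by collar-coning the thickened complexes coming from the construction of \cite{LMSSW}.
\item $L_n$ carries a legal state in the sense of the Jankiewicz--Norin--Wise game \cite{JNW}, so that a finite-index subgroup $G_n\le W_{L_n}$ maps onto $\Z$.
\item The ascending and descending links of this map are simply connected (and nonempty). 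Bestvina--Brady Morse theory on the Davis complex then shows that the kernel is finitely presented.
\end{itemize}
Since $\cd G_n=\vcd W_{L_n}=d$ whenever $\tilde H^{d-1}(L_n)\neq 0$ (or some full subcomplex has nonzero top cohomology), each $G_n$ is a hyperbolic group of cohomological dimension $d$ that fibres with finitely presented kernel. All of this is exactly the content of the proof of Theorem~\ref{main:fp-fibring}. The only new requirement is that the family $\{L_n\}$ can be chosen so that the $L_n$ differ in a quasi-isometry-detectable way. I would arrange this by varying a free parameter, for instance the number of coned collars or the number of disjoint copies of a building block glued along a common subcomplex.

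Following \cite{CDSS}, the distinguishing invariant will be the Gromov boundary. Quasi-isometric hyperbolic groups have homeomorphic boundaries. For a hyperbolic right-angled Coxeter group $W_L$, local features of $\partial W_L$ can be read off from $L$: local cut structure, local (Čech) cohomology groups near points, and topological dimension $\dim\partial W_L=d-1$. Concretely, I would compute for each $L_n$ a quantity such as the following.
\begin{enumerate}
\item The maximal number of components of $\partial W_{L_n}\setminus\{\text{point}\}$ locally. Equivalently, this is the maximal rank of a local top-dimensional Čech cohomology group $\check H^{d-2}$ at a point. It is governed by links of vertices and by how many $(d-1)$-simplices (or top-dimensional pieces) share a codimension-one face of $L_n$.
\end{enumerate}
Building $L_n$ so that some codimension-one simplex lies in exactly $n+1$ top simplices, while keeping flag-no-square and simple connectivity, should make this invariant equal to a strictly increasing function of $n$. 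Then the boundaries $\partial G_n=\partial W_{L_n}$ are pairwise non-homeomorphic, and the groups are pairwise non-quasi-isometric. The boundary of a finite-index subgroup is the same as that of the ambient group, so the invariant may be computed for $W_{L_n}$ rather than $G_n$.

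The main obstacle is compatibility between these steps. The branching modification used to vary the invariant must be performed before, or commute with, the thickening and collar-coning. At the same time it must preserve three things: the existence of a legal JNW state, the simple connectivity of ascending and descending links, and the flag-no-square condition. The collar-coning might itself introduce branching that swamps the invariant.

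I would first try to insert the branching into the base complex of the recursive construction of \cite{LMSSW} in dimension $d-1$, for example by taking a fixed number of copies glued along a top simplex. I would then check that the coning only adds branching of bounded size, independent of $n$. If the precise local cohomology computation proves delicate, a fallback is a coarser invariant in the style of \cite{CDSS}. One option is the count of local components near Sierpiński-type or "Menger-curve-like" points. Another option is to distinguish via $\ell^2$-Betti numbers. These are only commensurability invariants rather than QI invariants in general, but for the top degree $b^{(2)}_d$ they are QI-invariant up to scaling by the index. That would still allow one to separate groups of fixed dimension by proportionality arguments.
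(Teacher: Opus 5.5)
\textbf{There is a genuine gap in the quasi-isometry invariant.} Your first paragraph is fine, since it is just \cref{main:fp-fibring}. The problem is the part that is new in this statement. You propose to separate the groups by the homeomorphism type of $\partial W_{L_n}$, detected through how many top simplices branch along a codimension-one face of $L_n$. You give no argument that any local topological invariant of the boundary records this branching, and in the closest model case it does not. The graph products of $(\mathbb{Z}/2\mathbb{Z})^N$ over an $l$-cycle, $l\geq 5$, are lattices in Bourdon buildings whose chambers branch $2^N$-fold along every wall. Yet for $N\geq 2$ all their boundaries are homeomorphic to the Menger curve. This is why the approach of \cite{CDSS} that you cite uses conformal dimension, a quasisymmetric rather than topological invariant.

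Your set-up has further problems. After Osajda thickening and enlargement the nerves have simplices of dimension far above $\mathrm{vcd}-1$, so ``a codimension-one simplex in $n+1$ top simplices'' is not the relevant quantity. Gluing copies along a simplex makes that simplex separating. The group then splits over a finite group and has $b_1^{(2)}>0$, so no finite-index subgroup can algebraically fibre unless you repair this. Your fallback also fails: the values of $\ell^2$-Betti numbers are not quasi-isometry invariants, even up to a scaling factor. Proportionality holds under commensurability or measure equivalence, which quasi-isometry does not imply; only vanishing is a quasi-isometry invariant.

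\textbf{What the paper does instead.} The missing idea is a quasisymmetric invariant together with a natural parameter along which it grows. In the paper that parameter is the enlargement size itself. By \cref{verythick}, $\mathrm{e}_{\pi_N}(M)$ works for every $N\geq |M^{(0)}|$, with no change to hyperbolicity, the vcd, or finite presentability of the kernel. Suppose $M$ contains a full $l$-cycle with $l\geq 5$. Then its preimage in $\mathrm{e}_{\pi_N}(M)$ spans exactly the Bourdon-building lattice $G_{N,l}$ as a special subgroup. Special subgroups are convex, so $\partial G_{N,l}\subset \partial W$. Monotonicity of conformal dimension then gives $\mathrm{Cdim}\,\partial W\geq \mathrm{Cdim}\,\partial G_{N,l}$, and the latter tends to $\infty$ as $N\to\infty$ by Bourdon's computation. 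Conformal dimension of the boundary is a finite quasi-isometry invariant of hyperbolic groups, so infinitely many distinct values occur. This gives infinitely many quasi-isometry classes, with no need to understand the topology of the boundary at all.
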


The finitely presentability of the kernel implies the vanishing of the first two $\ell^2$-Betti numbers of the group \cite{F24}. Since the Euler characteristic of a group is equal to the alternating sum of its $\ell^2$-Betti numbers, its non-vanishing allows us to obstruct some finiteness properties. In particular, we get the following.

\begin{maintheorem}\label{main:fp-fibring-notFn}
	For every $n\geq 3$, there exist infinitely many quasi-isometry classes of hyperbolic groups of cohomological dimension $n$ that algebraically fibre with kernel of type $F_2$ but not $F_n$. Moreover, if $n$ is even, then the same result with $F_{n-1}$ in place of $F_{n}$ holds.
\end{maintheorem}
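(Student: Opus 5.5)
We plan to deduce this statement from Theorem~\ref{main:fp-fibring-qi} together with an Euler characteristic computation. Take $G$ a finite-index subgroup of a \RACG $W_L$ from the construction behind Theorem~\ref{main:fp-fibring-qi}, with $\vcd = n$, and a surjection $\varphi\colon G\to\Z$ whose kernel $K$ is finitely presented. Since $G$ is hyperbolic, it has type $F$, so $K$ is of type $F_2$ as soon as it is finitely presented. The argument rests on the following observation. Suppose $K$ were of type $F_m$. Then by the result of Fisher \cite{F24} (or directly, by L\"uck's approximation/mapping-torus argument: $G=K\rtimes\Z$ with $K$ of type $F_m$ gives $b_i^{(2)}(G)=0$ for $i\le m$), the $\ell^2$-Betti numbers $b_0^{(2)},\dots,b_m^{(2)}$ of $G$ vanish. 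Moreover, $b_i^{(2)}(G)=0$ for $i>n=\cd(G)$. Hence
\[
\chi(G)=\sum_{i=m+1}^{n}(-1)^i b_i^{(2)}(G).
\]

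Take $m=n$. Then $\chi(G)=0$, so it suffices that $\chi(G)=[W_L:G]\,\chi(W_L)\neq 0$. Here $\chi(W_L)=1-\tfrac{f_0}{2}+\tfrac{f_1}{4}-\cdots$, computed from the face numbers of the flag complex $L$. Now take $m=n-1$ with $n$ even. Then $\chi(G)=b_n^{(2)}(G)\ge 0$. So it suffices that $\chi(G)<0$, i.e.\ that $\chi(W_L)$ has sign $(-1)^{n+1}$ (negative for $n$ even), which is incompatible with $b_n^{(2)}\ge0$.

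The key steps, in order, are:
\begin{enumerate}
\item Recall the family of complexes $L$ (thickened, collar-coned flag-no-square complexes) of dimension $n-1$ used for Theorem~\ref{main:fp-fibring-qi}.
\item Compute or estimate $\chi(W_L)$ via the face-number formula, tracking how the thickening and the collar-coning change $f_i$.
\item Show that either infinitely many members of the family already have nonzero $\chi$ of the right sign, or that the construction can be modified to achieve this without destroying the fibring, simple connectivity, or hyperbolicity. Possible modifications are taking connected sums, adding extra cones, or choosing the input complex with large $f_{n-1}$.
\item Conclude via the $\ell^2$ argument above, keeping the quasi-isometry distinctness from \cite{CDSS}.
\end{enumerate}

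I expect the main obstacle to be step (3): controlling the sign of $\chi(W_L)$ through the recursive construction. For the odd case, or the $F_n$ statement, I would first try to show that the top-dimensional term $(-1)^{n}f_{n-1}/2^{n}$ dominates, since thickening multiplies top simplices quickly. I would also check whether collar-coning adds lower-dimensional simplices that could cancel it, and if so adjust the thickening parameter $\thicksize$ to make the top term large.
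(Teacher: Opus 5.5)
Your $\ell^2$ framework is the paper's. If the kernel has type $F_m$ then $b_0^{(2)}(G)=\dots=b_m^{(2)}(G)=0$, and the $\ell^2$-Betti numbers vanish above $n$. Hence $\chi(G)\neq 0$ excludes $F_n$, and $\chi(G)<0$ excludes $F_{n-1}$ for $n$ even.

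The gap is your step (3), which is the actual content of the theorem. There you only list possible modifications, and the one heuristic you develop would fail. First, the complexes are not $(n-1)$-dimensional. In the enlargement $\operatorname{e}_{\pi_h}(M)$ the $h$ vertices over a vertex of $M$ span an $(h-1)$-simplex, so the dimension is $h(\dim M+1)-1$; $n$ is only the vcd. Second, increasing $h$ does not make a top term dominate; instead there is wholesale cancellation. By Proposition~\ref{euler-char-thickening},
\[
\chi\bigl(W_{\operatorname{e}_{\pi_h}(M)}\bigr)=\sum_{\sigma\in M}\bigl(2^{-h}-1\bigr)^{\dim\sigma+1}\;\longrightarrow\;\sum_{\sigma\in M}(-1)^{\dim\sigma+1}=1-\chi(M)\qquad(h\to\infty),
\]
where $\chi(M)$ is the topological Euler characteristic of the un-enlarged complex. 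The top-dimensional simplex over $\sigma$ contributes $(-1/2)^{h(\dim\sigma+1)}\to 0$, so enlarging makes it small, not large. Thus for large $h$ the sign is governed by $\chi(M)$. Nothing in your plan excludes $\chi(M)=1$, in which case the leading behaviour cancels and your argument gives neither non-vanishing nor a sign.

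The missing idea is to use extra collar-cones to move this limit. A collared cone on an $\ell$-cycle adds $\ell+1$ vertices, $4\ell$ edges and $3\ell$ triangles. So it raises $\chi(M)$ by exactly $1$ and lowers $1-\chi(M)$ by $1$. Collar-coning a full cycle of length at least $5$ enough extra times has the following effects.
\begin{itemize}
\item The complex stays flag-no-square with the same vcd, by Lemma~\ref{cone}.
\item It stays free of $1$-disconnecting simplices. If $\sigma$ meets a new cone, that cone retracts away as in the proof of Lemma~\ref{cone}. Otherwise the cone is a disc glued to a simply connected complement.
\item Hence Theorem~\ref{verythick} still applies, while $1-\chi(M)$ becomes negative.
\end{itemize}
Then $\chi(G)=[W:G]\,\chi(W)<0$ for every sufficiently large $h$. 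This gives the $F_n$ statement and the even-$n$ $F_{n-1}$ statement at once, and is compatible with letting $h\to\infty$ in the conformal-dimension argument.
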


We believe our result is new even in dimension 3, although of course the most interesting case of hyperbolic groups of cohomological dimension 3, namely hyperbolic 3-manifolds, is dealt with by Agol's resolution of the virtual fibring conjecture \cite{Agol_haken}.
\begin{maincorollary}\label{main:fp-fibring-dim3}
	There exist infinitely many quasi-isometry classes of hyperbolic groups of cohomological dimension $3$ that algebraically fibre with kernel of type $F_2$ but not $F_3$.
\end{maincorollary}
In the special case $n=4$ of \cref{main:fp-fibring-notFn}, we obtain the following corollary, where the behaviour significantly differs from the case of hyperbolic manifold case.
\begin{maincorollary}\label{main:fp-fibring-dim4}
	There exist infinitely many quasi-isometry classes of hyperbolic groups of cohomological dimension $4$ that algebraically fibre with kernel of type $F_2$ but not $F_3$.
\end{maincorollary}
Previous work of Kropholler \cite{KropF2notF3} produced examples of cohomological dimension 3, but in the closing remarks of the paper Kropholler discusses the difficulties of using his methods to obtain the analogous result in higher cohomological dimensions.

To the best of our knowledge, these are the first examples of hyperbolic groups of cohomological dimension 4 that fibre with kernel of type $F_2$, and even the first examples of hyperbolic groups of cohomological dimension 4 with vanishing second $L^2$-Betti number. This contrasts with the case of hyperbolic 4-manifolds, where the proof of the Singer conjecture for such manifolds \cite{Dodziuk} implies that the second $L^2$-Betti number is non-zero (and in fact the only non-zero $L^2$-Betti number). Moreover, the Singer conjecture predicts that the same is true for all fundamental groups of closed 4-manifolds that admit a metric of strictly negative sectional curvature.

\subsection*{Structure of the paper}
The paper is structured as follows.

\begin{itemize}
	\item In \cref{sec:preliminaries} we recall some preliminary definitions and results.
	\item In \cref{sec:thickening} we give a brief exposition of the JNW game to fix notation, and prove a slight generalisation of the techniques of \cite{LMSSW}.
	\item In \cref{sec:coning} we describe our collar-coning procedure that, together with the previous thickening construction, is the main ingredient for constructing our family of groups.
	\item In \cref{sec:theorems} we prove the results stated in the introduction.%

	\item In \cref{sec:next} we discuss possible further directions.
\end{itemize}

\subsection*{Acknowledgments}
The first author gratefully acknowledges support from the Royal Society through the Newton International Fellowship (award number: NIF\textbackslash R1\textbackslash 231857).
The second author gratefully acknowledges funding by the DFG 281869850 (RTG 2229).
The first and second authors are members of the INdAM GNSAGA research group. The third author gratefully acknowledges funding from the European Union (ERC, SATURN, 101076148) and the Deutsche Forschungsgemeinschaft (EXC-2047/1 - 390685813).

The authors would like to thank the Isaac Newton Institute for Mathematical Sciences, Cambridge, for support and hospitality during the programme Operators, Graphs, and Groups, where work on this paper was undertaken. This work was partially supported by EPSRC grant no EP/R014604/1. The authors also thank the organisers of the conference `Higher dimensional hyperbolic geometry' in Ventotene, Italy, where work on this paper was undertaken. The authors thank Kevin Schreve for pointing out relevant references.

This paper forms part of the third author's PhD thesis.

\textit{For the purpose of Open Access, the authors have applied a CC BY public copyright licence to any Author Accepted Manuscript (AAM) version arising from this submission.}

\section{Preliminaries}\label{sec:preliminaries}

\subsection{Right-angled Coxeter groups}
\begin{definition}
	Given a simplicial graph $\Gamma$ with vertex set $V$ and edge set $E$, the associated right-angled Coxeter group $W_\Gamma$ is the group with the following presentation:
	\[ W_\Gamma := \langle v\in V \mid v^2=1 \ \forall v\in V,\ [v,w]=1 \ \forall (v,w)\in E \rangle \]
\end{definition}
There is a cube complex ${X_\Gamma}$ associated to a \RACG $W_\Gamma$, called the \newterm{Davis complex}. It is a $CAT(0)$ cube complex, whose $1$-skeleton corresponds to the Cayley graph of the group $W_\Gamma$ (with respect with the set of generators given by the vertices of $\Gamma$, and after having identified every bigon to a single edge), and such that the link of every vertex is isomorphic to the flag completion $\flag{\Gamma}$ of the graph $\Gamma$. A more complete description can be found in \cite{Davisbook}.

Using the identification with the Cayley graph, every edge of ${X_\Gamma}$ is labelled by one of the generators of the group. For this reason, there is a canonical isomorphism between the link of a vertex of ${X_\Gamma}$ and $\flag{\Gamma}$, which respect the labelling.

It is a well known fact that hyperbolicity of a \RACG can be verified from a combinatorial condition on the defining graph.
\begin{proposition} \cite{MoussongThesis88}
	A \RACG $W_\Gamma$ is word-hyperbolic if and only if its defining graph $\Gamma$ has no induced squares.
\end{proposition}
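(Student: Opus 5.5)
The plan is to prove both directions through the Davis complex $X_\Gamma$, which is a proper CAT(0) cube complex on which $W_\Gamma$ acts geometrically. By the \v{S}varc--Milnor lemma, $W_\Gamma$ is hyperbolic if and only if $X_\Gamma$ is Gromov hyperbolic. For CAT(0) spaces with a cocompact group action, hyperbolicity is equivalent to the absence of isometrically embedded flat planes (the flat plane theorem of Gromov and Bridson). For CAT(0) cube complexes, the flat plane theorem can be sharpened to a combinatorial statement about links and the cubical structure. So the argument reduces to relating induced squares in $\Gamma$ to flats in $X_\Gamma$.

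For the direction ``hyperbolic $\Rightarrow$ no induced square'', I argue by contraposition. Suppose $a,b,c,d$ span an induced square in $\Gamma$, with edges $ab, bc, cd, da$ and no diagonals. The subgroup generated by $\{a,c\}$ is the infinite dihedral group $D_\infty$, since $a$ and $c$ are not adjacent and so satisfy no relation. Likewise $\langle b,d\rangle\cong D_\infty$. These two subgroups commute elementwise, so the special subgroup on $\{a,b,c,d\}$ is $D_\infty\times D_\infty$, which contains $\Z^2$. Special subgroups of Coxeter groups are the Coxeter groups of their induced subgraphs and embed (Bourbaki), so $\Z^2\le W_\Gamma$. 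A hyperbolic group cannot contain $\Z^2$, so $W_\Gamma$ is not hyperbolic.

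For the direction ``no induced square $\Rightarrow$ hyperbolic'', I assume $W_\Gamma$ is not hyperbolic. Then $X_\Gamma$ contains an isometric flat $F$. I would first use the cubical structure to make $F$ combinatorial. A convenient route is the cubical flat plane theorem, which states that a cocompact CAT(0) cube complex is hyperbolic if and only if it contains no isometrically embedded square-tiled flat built from hyperplanes. More concretely, I would produce two families of pairwise disjoint hyperplanes $\{H_i\}$ and $\{K_j\}$, each family forming an unbounded chain, with every $H_i$ crossing every $K_j$. In $X_\Gamma$ each hyperplane carries a label in $V$, and two crossing hyperplanes have labels that are adjacent in $\Gamma$. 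Two disjoint but ``consecutive'' hyperplanes, with no hyperplane separating them, either share a label or have distinct non-adjacent labels. By passing to a subsequence and using that $W_\Gamma$ acts cocompactly, I would extract two consecutive hyperplanes $H_1,H_2$ with labels $a\neq c$ that are non-adjacent, and similarly $K_1,K_2$ with labels $b\neq d$ that are non-adjacent. Crossing gives $b,d$ adjacent to both $a$ and $c$, so $\{a,b,c,d\}$ is an induced square.

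The main obstacle is the label-distinctness step. A naive grid could use the same generator repeatedly, for example $H_1$ and $H_2$ both labelled $a$, and a single reflection alone does not produce a flat. To handle this, I would pick $H_1, H_2$ adjacent along a geodesic in the $K$-direction so that there is a gallery between them. The word read along that geodesic is reduced and crosses hyperplanes all commuting with the $K$ family. Any reduced word of length at least $2$ contains two consecutive distinct letters, and if these commuted they would have to be non-adjacent only after a suitable choice. Because $\langle$letters$\rangle$ is infinite, the subgraph they span cannot be a clique, so some two letters in it are non-adjacent. Since all of these letters are adjacent to every letter of the $K$-direction, and symmetrically, we get the induced square.
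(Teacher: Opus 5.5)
The paper gives no proof of this statement; it cites Moussong. Your first direction ($\langle a,c\rangle\times\langle b,d\rangle\cong D_\infty\times D_\infty\supset\Z^2$) is correct. Reducing the second direction to a grid of hyperplanes via the flat plane theorem is a legitimate alternative to the classical argument, which metrizes the Davis complex with slightly acute hyperbolic cubes and uses that a flag link with no induced $4$-cycle stays CAT(1).

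The gap is in the step you yourself flag as the main obstacle, and that step is really the whole content of this direction. Passing to a subsequence of $\{H_i\}$ and using cocompactness cannot produce consecutive hyperplanes. Thinning a chain only increases the number of hyperplanes separating successive terms, and translating by $W_\Gamma$ does not change that number. Your fallback asserts that the letters read along a geodesic from $H_1$ to $H_2$ generate an infinite group, but you give no reason; the sentence about consecutive letters of a reduced word does not supply one. You also need, and do not check, that every hyperplane crossed on the $H$-side crosses every hyperplane crossed on the $K$-side, not merely the original $K_j$.

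Both points can be repaired, and a single $2\times 2$ grid suffices.
\begin{itemize}
\item If $L$ separates $H$ from $H'$, then by connectedness any hyperplane meeting both $H$ and $H'$ crosses $L$.
\item Suppose distinct disjoint hyperplanes $H,H'$ have no hyperplane separating them. Their carriers then share a vertex $v$, since the distance between carriers counts separating hyperplanes. At $v$ they are dual to distinct edges, so their labels satisfy $a\neq c$. Moreover $a,c$ are non-adjacent, since otherwise the square at $v$ spanned by these two edges would make $H$ and $H'$ cross. In particular your ``share a label'' alternative never occurs.
\end{itemize}
Now suppose you have disjoint $H_1,H_2$ and disjoint $K_1,K_2$ with all four crossings.
\begin{itemize}
\item Choose disjoint $H,H'$, each crossing $K_1$ and $K_2$, with the fewest hyperplanes separating them. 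A separating $L$ would cross $K_1$ and $K_2$, and $(H,L)$ would be a strictly better pair, so $H,H'$ are consecutive.
\item Then choose disjoint $K,K'$, each crossing $H$ and $H'$, minimising in the same way; $K_1,K_2$ qualify, so such a pair exists.
\end{itemize}
The labels $a,c$ of $H,H'$ and $b,d$ of $K,K'$ then span an induced square. This also justifies your ``infinite'' claim: the letters of a geodesic crossing two disjoint hyperplanes cannot span a clique. If they did, the reduced word would have distinct commuting letters, so the path would lie in a single cube, whose hyperplanes pairwise cross.
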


\subsection{The Jankiewicz-Norin-Wise combinatorial game}
In \cite{JNW}, Jankiewicz, Norin, and Wise introduced a combinatorial game to construct virtual maps from a \RACG to $\Z$, in a way that is conducive to studying finiteness properties of the kernel. The main underlying tool for this analysis is Bestvina-Brady Morse theory \cite{BestvinaBrady}.
We use a slight modification of it, and we briefly summarize the procedure here.

\begin{definition}
	Let $\Gamma$ be a simplicial graph.
	\begin{enumerate}
		\item A \newterm{state} on a simplicial graph $\Gamma$ is the partition of its vertices into two sets, called $I$ and $O$, standing for ``IN'' and ``OUT''.
		\item A \newterm{(coloured) system of moves} on $\Gamma$ is a partition of the set of vertex in a finite number of colours $V=\bigsqcup_{i=1}^{c}V_i$. The sets $V_1, \dots, V_c$ are often referred to as the \textit{colours} $\{1, \dots, c\}$, or the \textit{moves}. In this paper we will only consider system of moves which are \newterm{sparse}, i.e. for which adjacent vertices belong to different moves.
	\end{enumerate}
\end{definition}

Moves act on states in the following way. Given the move $m_i$ and a state $S$, the state $m_i(S)$ is the state where every vertex in $V\setminus V_i$ maintains the same label $I/O$ as the one on $S$, while the vertices in $V_i$ switch to the opposite label to the one in $S$.
\smallskip

The \textit{(coloured) JNW combinatorial game} starts with the assignment of a (coloured and sparse) system of moves, and the choice of a starting state $S_0$ on $\Gamma$. Let $c$ be the number of moves in the system, and let $\alpha \colon W_\Gamma \to (\Z/2\Z)^c$ the homomorphism that maps every generator $v$ to the element of the canonical basis $e_{m(v)}\in(\Z/2\Z)^c$ corresponding to the move $m(v)$ assigned to $v$. Let $G'=\ker \alpha$, and $X'=G'\backslash{X_\Gamma}$. Note that $G'$ is a finite-index subgroup of $W_\Gamma$ (of index $2^c$), $X'$ is a non-positively curved cube complex with $2^c$ vertices, and such that $\pi_1(X')=G'$. Moreover, $X'$ inherits the labelling of the edges from $X_\Gamma$, and so in particular also the natural isomorphisms of each vertex links with the flag completion $\flag{\Gamma}$ of $\Gamma$. %

\begin{enumerate}
	\item Fix $x_0\in X'$ a base vertex. On the link of $x_0$ we assign the starting state $S_0$. Since every edge of $X'$ is labelled by a $v\in V(\Gamma)$, it is also labelled by the move associated to $v$. We can now propagate the state $s_0$ to the other vertices. By choosing a path connecting $x_0$ to another vertex $x$, and applying all the moves appearing in the edges of the paths to $s_0$, we obtain a new state $s$. It is possible to verify that $s$ does not depend on the choice of the path, and it is therefore a well-posed definition for a state on $x$.
	\item Each edge of $X'$ inherits opposed status I and O at the two vertices, this allows us to define a direction on it (going from OUT to IN). The fact that the system of moves is sparse ensures that on every square parallel edges get the same direction.
	\item This coherence property on parallel edges implies that each cube has exactly one vertex where all edges have arrows that are going out.
	      We can identify each $d$-cube with the standard cube $[0,1]^d$, by setting this vertex to be $(0,\dots,0)$.
	\item Following the directions on the edges, we define a \newterm{diagonal map} onto $S^1$ on each cube of $X'$, using the composition $[0,1]^d\to \mathbb{R} \to \mathbb{R}/\Z=S^1$ given by $(x_1,\dots,x_d)\mapsto \sum x_i$ and the projection to the quotient.
	\item This map is a Bestvina-Brady Morse function, and its ascending (resp. descending) links correspond to the subcomplex of $\tilde{\Gamma}$ spanned by the vertices labelled with $O$ (resp. $I$). This induces a map $ G' = \pi_1(X') \to \pi_1(S^1) = \Z $, and we can infer properties of its kernel by using the following theorem from \cite{BestvinaBrady}.
\end{enumerate}

\begin{theorem}[\cite{BestvinaBrady}]
	Let $f \colon X \to S^1$ Morse function and let $H = \ker{f_*}$.
	Suppose that each ascending link and each descenting link is $k$-connected. Then $H$ is of type $F_{k+1}$.

\end{theorem}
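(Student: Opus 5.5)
This is the Bestvina--Brady theorem, so the plan is to reproduce their Morse-theoretic argument rather than rely on anything specific to right-angled Coxeter groups.

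\textbf{Setting up the level sets.} Lift $f$ to an $H$-equivariant Morse function $\tilde f\colon \bar X\to\mathbb{R}$, where $\bar X=\tilde X/H$ is the infinite cyclic cover of $X$ associated to $\ker f_*$. Then $\bar X$ is a cocompact $\Z$-space, and $H=\ker f_*$ acts freely and cocompactly on each level set of $\tilde f$ lifted to the universal cover. More concretely, let $\tilde X$ be the universal cover. Then $H$ acts freely on $\tilde X$ preserving $\tilde f$, and acts cocompactly on every preimage $\tilde X_J=\tilde f^{-1}(J)$ of a compact interval $J$. Since $\tilde X$ is contractible (indeed in our applications it is CAT(0), but contractibility is enough), it suffices to prove that $\tilde X_J$ is $k$-connected for a suitable interval $J$. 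Then $H$ acts freely and cocompactly on a $k$-connected complex, and by the standard criterion (Brown's, or simply by attaching equivariant cells of dimension $\ge k+2$ to kill higher homotopy) $H$ is of type $F_{k+1}$.

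\textbf{Key step: Morse-theoretic filtration.} Fix $J=[a,b]$. Write $\tilde X=\bigcup_n \tilde X_{[a-n,b+n]}$. We would show that each inclusion $\tilde X_{[a-n,b+n]}\hookrightarrow \tilde X_{[a-n-1,b+n+1]}$ induces isomorphisms on $\pi_i$ for $i\le k$. The argument goes by enlarging the interval one vertex-level at a time. Passing a vertex $v$ with $\tilde f(v)$ above the current top, the enlarged space deformation retracts onto the old space with the cone on the descending link $\mathrm{Lk}_\downarrow(v)$ attached along $\mathrm{Lk}_\downarrow(v)$. The symmetric statement holds with ascending links at the bottom. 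This retraction is the usual one along the cube structure, where $\tilde f$ is affine on cells and nonconstant on edges.

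Since the vertices being added at a given height are pairwise non-adjacent (the function is nonconstant on edges), the cones are attached independently. Attaching a cone along a $k$-connected subcomplex does not change $\pi_i$ for $i\le k$: use van Kampen for $\pi_1$, and Mayer--Vietoris plus Hurewicz, or the homotopy excision/gluing lemma, in degrees $\le k$. Combining this with compactness of spheres gives $\pi_i(\tilde X_J)\cong \pi_i(\tilde X)=0$ for $i\le k$.

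\textbf{Main obstacle.} The delicate step is the deformation retraction onto ``old space $\cup$ cones on links''. One must check that in each cell containing $v$, the part of the cell at heights between the old top and $\tilde f(v)$ retracts radially from $v$ onto its boundary intersection. The result is exactly the join of $v$ with the descending part of the link. This is where the affineness of $\tilde f$ on cubes and its non-constancy on edges are used. I would do it cell by cell, by induction on skeleta, and check compatibility on faces. The rest, namely the passage to finiteness $F_{k+1}$ from a $k$-connected cocompact free complex, is standard.
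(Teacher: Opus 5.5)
The paper does not reprove this theorem; it cites Bestvina--Brady, and your outline is exactly their argument: $H$ acts freely and cocompactly on the level set $\tilde X_J$, critical levels are passed up to homotopy by coning off descending (resp.\ ascending) links, and a direct limit runs into the contractible $\tilde X$, so the proposal is correct --- provided one makes explicit the hypotheses you used but the statement omits, namely that $X$ is finite and $\tilde X$ is contractible, both of which hold for the non-positively curved cube complexes used in the paper. The one step to fix is the local retraction, which is not radial from $v$ (that is undefined at $v$ and would move the pieces of the cone lying in $\partial\sigma$); instead, a cell whose top vertex is $v$ meets the new slab exactly in $v$ joined to a face of $\mathrm{Lk}_\downarrow(v)$, a cell whose top vertex lies above the level is pushed by straight-line projection from that top vertex onto its lower level set union its intersection with $\partial\sigma$, and the cones are disjoint because every cell has a unique top vertex, not because same-height vertices are non-adjacent (such vertices can share a cube, e.g.\ $(1,0)$ and $(0,1)$ in $[0,1]^2$).
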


In particular, we will be looking for an initial state and a system of moves that produce an orbit in which all ascending and descending links are $k$-connected. Following the terminology from \cite{JNW}, we will call such an orbit \newterm{$k$-legal}.

\subsection{Thickening and high virtual cohomological dimension}
The \newterm{cohomological dimension} of a group $G$ is defined as the supremum of the $n$ such that $H^n(G,M)\neq 0$ for some $\Z G$-module $M$. If $G$ has torsion, its cohomological dimension is always $\cd G = \infty$. For virtually torsion-free groups, the usual notion of dimension that is used is \newterm{virtual cohomological dimension}, denoted $\vcd G$, which is the cohomological dimension of any torsion-free finite index subgroup of $G$ (this is indeed independent of the choice of such subgroup).

The virtual cohomological dimension of a \RACG can be explicitly computed from the defining graph $\Gamma$.
\begin{theorem}\cite[Corollary 8.5.5] {Davisbook}
	Let $\Gamma$ be a simplicial graph, and let $L$ be its flag completion. Then
	\[ \vcd W_\Gamma = \max \{n \colon \bar{H}^{n-1}(\flag{\Gamma}\setminus \sigma) \neq 0 \text{ for some simplex $\sigma$} \}. \]
\end{theorem}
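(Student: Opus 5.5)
The plan is to compute $\vcd W_\Gamma$ via the cohomology of $W_\Gamma$ with group-ring coefficients, and then to evaluate that cohomology with Davis's decomposition of the compactly supported cohomology of the Davis complex. Throughout, $L=\flag{\Gamma}$, $S=V(\Gamma)$, and $\Sigma$ denotes the Davis complex, realised as the cubical complex ${X_\Gamma}$ with its cell structure refined into chambers.

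\emph{Step 1: reduction to compactly supported cohomology.} $W_\Gamma$ acts properly and cocompactly on the contractible complex ${X_\Gamma}$, so it is virtually torsion-free and of type $VF$. For a torsion-free finite-index subgroup $G'$ of type $F$, Brown's criterion gives $\cd G' = \max\{n \colon H^n(G';\Z G')\neq 0\}$. Moreover $H^n(G';\Z G')\cong H^n_c({X_\Gamma})$, since $G'$ acts freely and cocompactly on the contractible complex ${X_\Gamma}$. Hence
\[ \vcd W_\Gamma = \max\{n \colon H^n_c({X_\Gamma})\neq 0\}. \]

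\emph{Step 2: Davis's chamber decomposition.} Let $K$ be the Davis chamber, namely the cone on the barycentric subdivision $L'$. For $s\in S$, the mirror $K_s$ is the closed star of $s$ in $L'$; for $T\subseteq S$ put $K^T=\bigcup_{s\in T}K_s$. Then $\Sigma=(W_\Gamma\times K)/\sim$. For $w\in W_\Gamma$, let $\In(w)=\{s\colon \ell(ws)<\ell(w)\}$; this set always spans a simplex $\sigma_w$ of $L$ (possibly empty), and $\Out(w)=S\setminus \In(w)$. Order $W_\Gamma$ by word length and filter $\Sigma$ by the union of chambers of length $\le k$. Each new chamber $wK$ meets the previous stage exactly in $wK^{\In(w)}$. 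Working with compact supports, this gives
\[ H^*_c(\Sigma)\cong\bigoplus_{w\in W_\Gamma} H^*(K,K^{\Out(w)}). \]
I expect this to be the main obstacle: one must show that the long exact sequences of the filtration split, which is Davis's argument using the projections $\Sigma\to$ (ball of radius $k$). I would cite it from \cite{Davisbook} rather than reprove it.

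\emph{Step 3: identifying the summands.} Since $K$ is contractible, $H^n(K,K^{T})\cong \bar H^{n-1}(K^{T})$. For $T=S\setminus\sigma$, the union of the closed stars in $L'$ of the vertices not in $\sigma$ deformation retracts onto the full subcomplex $L\setminus\sigma$ spanned by those vertices. The retraction pushes each barycentre along the segment joining its face in $\sigma$ to its face off $\sigma$. Hence each summand equals $\bar H^{n-1}(L\setminus\sigma_w)$.

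\emph{Step 4: assembly.} Every simplex $\sigma$ of $L$ arises as $\In(w)$; for example take $w$ to be the product of the vertices of $\sigma$. Combining this with Steps 1--3, $H^n_c(\Sigma)\neq 0$ if and only if $\bar H^{n-1}(L\setminus\sigma)\neq0$ for some simplex $\sigma$. The maximum of such $n$ is therefore $\vcd W_\Gamma$, which is the formula in the statement.
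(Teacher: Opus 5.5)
Your proposal is correct. It is essentially the argument by which Davis derives Corollary~8.5.5; the paper gives no proof of its own and only cites it. You use Brown's criterion to reduce $\vcd$ to $H^*_c(\Sigma)$, Davis's decomposition $H^*_c(\Sigma)\cong\bigoplus_w H^*(K,K^{\Out(w)})$ does the real work, and $K^{S\setminus\sigma}\simeq L\setminus\sigma$ identifies the summands.

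Two minor points. First, the fact you state, that $wK$ meets the union of shorter chambers in $wK^{\In(w)}$, is the input for the \emph{homology} decomposition. For compact supports the relevant fact is that $wK$ meets the longer chambers in $wK^{\Out(w)}$, but since you cite Davis for this step, nothing breaks. Second, the finite case ($L$ a simplex, $\Out(w)=\varnothing$) needs the convention $\bar H^{-1}(\varnothing)=\Z$.
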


There are examples of hyperbolic \RACGs of arbitrarily high virtual cohomological dimension. This fact was somewhat surprising at the time of discovery, as it does not hold for Coxeter polytopes, nor for Coxeter groups which are Poincaré duality groups (see, for example, \cite{Vinberg85}). It was indeed conjectured that there should be an universal bound on the virtual cohomological dimension of any word hyperbolic Coxeter group. The conjecture first appeared in print in Moussong's thesis \cite{MoussongThesis88} and was disproved by Januszkiewicz--\'Swi\k atkowski \cite{JS03} and later independently by Osajda \cite{Osajda}, who constructed families of hyperbolic Coxeter groups of every possible virtual cohomological dimension. Both of their constructions are purely combinatorial.

\smallskip

We recall Osajda's construction, as our argument relies on a modification of it.

\begin{definition}\label{def:osajda}
	Given a cube complex $X$, the thickening $\Th[1]X$ is the flag simplicial complex with the same set of vertices as $X$, and where vertices $v_0, \dots, v_n $ span a simplex if and only if they are contained in a common cube in $X$.
\end{definition}
Osajda's construction proceeds recursively in the following way:
\begin{enumerate}
	\item We start with a finite flag-no-square simplicial complex $\Delta_n$ with $H^{n-1}(\Delta_n)\neq 0$, take $\Gamma_n$ to be its $1$-skeleton, $W_n$ to be the \RACG associated to $\Gamma_n$, and $X_n$ its Davis' complex.
	\item We select a torsion-free subgroup $H<W_n$ of sufficiently large finite index. Then the quotient $Y_n= X_n / H$ is a negatively curved cube complex, and we can prove that $H^{n}(Y_n)\neq 0$.
	\item We define $\Delta_{n+1}=\Th[1]{Y_n}$ to be the thickening. Then $\Delta_{n+1}$ is a finite flag-no-square simplicial complex with $H^{n}(\Delta_{n+1})\neq 0$, and we can proceed inductively.
\end{enumerate}

\subsection{Conformal dimension}
In order to prove the quasi-isometry part of \cref{main:fp-fibring-qi} we will distinguish our examples by the conformal dimensions of their visual boundaries, following \cite{CDSS}. Here we introduce the necessary background material on conformal dimension. For a more detailed treatment the reader is referred to the monograph \cite{MT}.
\begin{definition}
	Let \((M, d)\) and \( (M', d')\) be metric spaces. A homeomorphism \(f \colon M \to M'\) is a \emph{quasisymmetry} if there exists a homeomorphism \(\varphi \colon [0, \infty) \to [0, \infty)\) so that for distinct \(x, y, z \in M\):
	\[
		\frac{d'(f(x), f(y))}{d'(f(x), f(z))} \leq \varphi\left(\frac{d(x, y)}{d(x, z)}\right).
	\]
	The spaces \((M, d)\) and \((M', d')\) are \emph{quasisymmetric} if there exists a quasisymmetry \(f \colon M \to M'\).
\end{definition}

\begin{definition}
	Let \((M, d)\) be a metric space. The \emph{conformal dimension} of \(M\), denoted $\cdim(M)$, is the infimal Hausdorff dimension of any metric space \((M', d')\) quasisymmetric to \((M, d)\).
\end{definition}

Conformal dimension is monotone in the sense that $A \subset B$ implies $\cdim(A) \leq \cdim(B)$ \cite[Section 2.2]{MT}. This definition was introduced by Pansu \cite{P89}, who realised its usefulness in distinguishing metric spaces up to quasi-isometry. We discuss this below.

\begin{definition}
	Let $M$ be a proper geodesic metric space. The visual boundary of $M$ is denoted $\partial M$ and consists of all equivalence classes of geodesic rays, where two rays $\gamma, \gamma' \colon [0, \infty) \to M$ are equivalent if there exists a constant $C$ so that $d(\gamma(t), \gamma' (t)) \leq C$ for all $t \in [0, \infty)$.
\end{definition}

It turns out that for $\delta$-hyperbolic proper geodesic metric spaces $M$, this induces a metric on the boundary $\partial M$. Furthermore, the conformal dimension of the Gromov boundary with this metric is a quasi-isometry invariant for $\delta$-hyperbolic spaces with uniformly perfect boundaries \cite[Corollary 3.2.14]{MT}. In particular, this includes all word hyperbolic groups, for which the conformal dimension is finite \cite{P97}. In general, conformal dimension takes values in $\{0\} \cup [1, \infty]$ and all possible values are realised \cite{K}. As Pansu showed, this invariant can distinguish spaces with homeomorphic boundaries.

\section{Thickening and JNW game}\label{sec:thickening}

In this section we recall the main constructions and results of \cite{LMSSW} that are needed for this paper.

In \cite{LMSSW} the authors introduce the $\alpha$-thickening to make the Jankiewicz--Norin--Wise combinatorial game work. For our purposes, it is convenient to split this construction as a composition of Osajda's thickening (see \cref{def:osajda}) and an operation defined on simplicial complexes that we call $\alpha$-enlargement.

\begin{definition}
	Let $\scx$ be a simplicial complex, and let $\alpha \colon Y \to \skel0\scx $ be a surjective map. The $\alpha$-enlargement is a simplicial complex $\enlargement \scx$, equipped with a simplicial map $\bar\alpha \colon \enlargement[\alpha]\scx \to \scx$, defined as follows.
	\begin{itemize}
		\item The vertex set of $\enlargement[\alpha]\scx$ is $Y$.
		\item Vertices $v_0, \dots, v_n $ of $\enlargement[\alpha] \scx$ span a simplex if and only if $\set{ \alpha(v_0), \dots, \alpha(v_n) }$ is the vertex set of a simplex in $\scx$.
		\item The map $\bar\alpha$ is the unique simplicial map that extends $\alpha$.
	\end{itemize}
\end{definition}

\begin{remark}
	Note that a surjective simplicial map $f \colon \varscx \to \scx$ arises as an $\alpha$-enlargement if and only if preimages of simplices in $\scx$ are simplices in $\varscx$. In this case, it is easy to see that $\varscx = \enlargement[\alpha]\scx$, where $\alpha = \restrict f{\skel0\varscx}$.
\end{remark}

The $\alpha$-thickening $\Th X$ of the cube complex $X$, described in \cite{LMSSW}, coincides with $\enlargement{\Th[1]X}$. If $\alpha$ is the identity map, then $\Th X$ coincides with the Osajda thickening $\Th[1] X$.

\begin{lemma}
	The map $\bar \alpha \colon \enlargement[\alpha]\scx \to \scx$ is a homotopy equivalence.
\end{lemma}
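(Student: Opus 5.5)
We exhibit an explicit homotopy inverse. Since $\alpha\colon Y\to\skel0\scx$ is surjective, we choose a section $s\colon \skel0\scx\to Y$, so $\alpha\circ s=\mathrm{id}$. First we check that $s$ extends to a simplicial map $\bar s\colon\scx\to\enlargement[\alpha]\scx$. If $w_0,\dots,w_n$ span a simplex of $\scx$, then $\alpha(s(w_i))=w_i$, so $\{\alpha(s(w_0)),\dots,\alpha(s(w_n))\}$ is the vertex set of a simplex of $\scx$. By definition of the enlargement, $s(w_0),\dots,s(w_n)$ then span a simplex of $\enlargement[\alpha]\scx$. Clearly $\bar\alpha\circ\bar s=\mathrm{id}_\scx$, since this holds on vertices and both maps are simplicial.

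It remains to show that $\bar s\circ\bar\alpha$ is homotopic to the identity of $\enlargement[\alpha]\scx$. We use the standard contiguity criterion: two simplicial maps $f,g\colon K\to L$ such that, for every simplex $\tau$ of $K$, the vertices of $f(\tau)\cup g(\tau)$ span a simplex of $L$, are homotopic via the straight-line homotopy inside that simplex. Take a simplex $\tau=\{v_0,\dots,v_n\}$ of $\enlargement[\alpha]\scx$. The relevant vertex set is $\{v_0,\dots,v_n, s\alpha(v_0),\dots,s\alpha(v_n)\}$. Its image under $\alpha$ is $\{\alpha(v_0),\dots,\alpha(v_n)\}$, because $\alpha s\alpha=\alpha$. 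This is the vertex set of a simplex of $\scx$, since $\tau$ is a simplex of the enlargement. By the defining property of $\enlargement[\alpha]\scx$, the whole set therefore spans a simplex. So $\bar s\circ\bar\alpha$ and $\mathrm{id}$ are contiguous, hence homotopic, and $\bar\alpha$ is a homotopy equivalence with inverse $\bar s$.

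There is no serious obstacle. The only point needing care is the contiguity step: the vertex sets must be treated as sets, since $\alpha$ may identify vertices, and the definition of the enlargement is stated exactly in terms of the set $\{\alpha(v_i)\}$, which is what makes the argument go through. The homotopy can be written explicitly as $H(x,t)=(1-t)x+t\,\bar s\bar\alpha(x)$, computed inside the simplex spanned by $\tau\cup\bar s\bar\alpha(\tau)$. It is continuous because its definitions on the individual simplices agree on faces.
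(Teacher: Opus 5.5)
Your proof is correct, but it takes a different route from the paper's.

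The paper also starts from a section $\iota\colon M\to e_\alpha(M)$ of $\bar\alpha$. Instead of a contiguity argument, it shows that $e_\alpha(M)$ simplicially collapses onto $\iota(M)$. For a vertex $v\notin\iota(M)$, every simplex containing $v$ can be enlarged by $\iota(\alpha(v))$, so the link of $v$ is a cone with apex $\iota(\alpha(v))$. Hence the open star of $v$ can be removed by elementary collapses. What remains is again an enlargement (the full subcomplex on the remaining vertices, still containing $\iota(M)$), and one iterates.

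Both arguments rest on the same observation: adding vertices lying in the fibres over $\alpha(\tau)$ to a simplex $\tau$ of the enlargement still gives a simplex. The paper uses this one vertex at a time. You use it for all of $s\alpha(\tau)$ at once.

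Your version has some advantages. It is shorter and exhibits the homotopy inverse $\bar s$ and the homotopy explicitly. It also never uses finiteness of the vertex set, so it works verbatim for infinite complexes, where ``iterating'' the collapses would need extra care.

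The paper's version gives a stronger combinatorial conclusion. Since $e_\alpha(M)$ collapses onto a copy of $M$, the two are simple-homotopy equivalent and $\iota(M)$ is a strong deformation retract. This is more than the paper actually needs: later it only uses that ascending links are homotopy equivalent to $M\setminus\sigma$.
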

This property is standard and was also used in \cite{LMSSW}; we recall the proof for the reader's convenience.

\begin{proof}
	Choose any embedding $\iota \colon \scx \to \enlargement[\alpha] \scx $ so that $\bar\alpha \circ \iota $ is the identity. We claim $\enlargement[\alpha]\scx$ collapses to the image of $\iota$. To see this, note that if $v$ is a vertex not in the image of $\iota$, then its link is a cone on $\iota(\alpha(v))$, so the removal of the open star of $v$ can be achieved by a sequence of simplicial collapses. Iterating this procedure yields the claim, which in turn proves the lemma.
\end{proof}

We now adapt the notion of \emph{disconnecting cubes} and \emph{isolated vertices}, introduced in \cite{LMSSW}, from the cube complex to its thickening.

\begin{definition}
	Let $\scx$ be a simplicial complex. A simplex $\sigma \in \scx$ is $k$-disconnecting if $\scx \setminus \sigma $ is not $k$-connected. We say that $\scx$ has no $k$-disconnecting simplices if $\scx$ is $k$-connected and no simplex of $\scx$ (of any dimension) is $k$-disconnecting.
\end{definition}

Via a recursive construction, \cite{LMSSW} prove the following.

\begin{theorem}
	For every $n \geq 2$ there exist a cube complex $X_n$ such that every thickening $ \Th {X_n}$:
	\begin{itemize}
		\item is \fns;
		\item has no $0$-disconnecting simplices;
		\item defines a Coxeter group with virtual cohomological dimension $n+1$.
	\end{itemize}
\end{theorem}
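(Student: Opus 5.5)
We argue by induction on $n$, following Osajda's recursion but tracking connectivity of complements of simplices as well. For the base case $n=2$ we take a finite \fns simplicial complex $\Delta_2$ with $H^1(\Delta_2)\neq 0$ whose complements of simplices are connected, for instance a triangulated cycle of length at least $5$ (or a suitable \fns surface). We let $W_2$ be the associated \RACG, choose a torsion-free finite-index subgroup $H$ of large index, and set $X_2 = X_{W_2}/H$. In general, given $X_n$ we define $\Delta_{n+1} = \Th[1]{X_n}$, and $X_{n+1}$ is a quotient of its Davis complex by a suitable torsion-free finite-index subgroup. Since every $\alpha$-thickening is $\enlargement{\Th[1]X_n}$, we must check the three properties for arbitrary surjections $\alpha$.

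\textbf{Flag-no-square.} By definition $\enlargement{\Th[1]X}$ is flag, since flagness of $\Th[1]X$ pulls back through the enlargement. Suppose $\Th{X}$ contains an induced square $a,b,c,d$. If $\alpha(a)=\alpha(c)$, then $a$ and $c$ would be adjacent, since both lie in the preimage of a simplex; so the images are distinct. The images therefore form either an induced square in $\Th[1]X$ or a configuration in which two opposite vertices are adjacent, and the latter would make the preimages adjacent. Hence it suffices that $\Th[1]X$ is \fns, which is Osajda's result provided the subgroup $H$ has large enough index (large injectivity radius / systole in $X_n$).

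\textbf{Virtual cohomological dimension.} Since $\bar\alpha$ is a homotopy equivalence, $\Th X_n \simeq \Th[1]X_n \simeq X_n$, because the thickening of a locally finite cube complex is homotopy equivalent to it (nerve lemma applied to the cover by maximal cubes). Thus $H^{n}(\Th X_n)\cong H^n(X_n)\neq0$, using the step in Osajda's argument that $H^n(Y_n)\neq 0$. By Davis' formula this gives $\vcd \geq n+1$. For the upper bound, $\Th X_n$ has dimension equal to $\dim X_n = n$, so the top reduced cohomology of any $\Th X_n\setminus\sigma$ vanishes above degree $n$, giving $\vcd \le n+1$.

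\textbf{No $0$-disconnecting simplices.} This is the main obstacle. First, $\Th X_n$ is connected because $X_n$ is. Now let $\sigma$ be a simplex, with $\tau=\bar\alpha(\sigma)$ a simplex of $\Th[1]X_n$. Every vertex outside $\sigma$ whose image lies in $\tau$ is adjacent to every vertex of the preimage of $\tau$ not in $\sigma$. So it is enough to show two things: that $\Th[1]X_n \setminus \tau$ is connected, and that each such extra vertex is joined to something outside $\bar\alpha^{-1}(\tau)$. For the second, a neighbour of $\tau$ in $\Th[1]X_n$ lifts to a vertex adjacent to the extra vertex. For the first, $\tau$ is contained in a cube $C$ of $X_n$, and $\Th[1]X_n\setminus\tau$ contains the $1$-skeleton of $X_n$ minus a set of vertices of $C$. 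I would show connectivity by routing paths in $X_n$ around $C$ through the vertices of $C$ not in $\tau$ together with diagonals in neighbouring cubes. If $\tau$ is all of $C$, I would use that the link of $C$ in $X_n$ (a link in $\flag{\Gamma_n}$) is nonempty and that $\Delta_n$ has no disconnecting simplices, which is the inductive hypothesis. Concretely, the complement of $C$ is connected precisely when links of vertices minus simplices are connected, and that is exactly the hypothesis on $\Delta_n$ propagated through the Davis complex. This local-to-global step is where I expect the real work: for each vertex $v\in C$, the neighbours of $v$ outside $C$ form the link minus a simplex, which is connected by induction and nonempty, and these pieces then glue along $X_n$.
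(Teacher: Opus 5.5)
Your overall strategy matches the recursive construction the paper attributes to \cite{LMSSW} (the paper itself quotes the statement without proof): run Osajda's recursion, carry the extra invariant that $\Delta_n$ has no $0$-disconnecting simplices, and reduce arbitrary $\alpha$ to $\alpha=\mathrm{id}$. The flag-no-square reduction is fine once you also check consecutive vertices of an induced square; if two of them had the same image, the opposite vertices would become adjacent.

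\textbf{The upper bound $\vcd\le n+1$ is wrong as argued.} $\Th[1]{X_n}$ is not $n$-dimensional: the $2^n$ vertices of each $n$-cube span a $(2^n-1)$-simplex. An enlargement has even larger simplices, because every fibre of $\alpha$ spans a simplex. So ``cohomology vanishes above the dimension'' gives nothing close to $n+1$. What you need is that each $\Th{X_n}\setminus\sigma$ has the homotopy type of a complex of dimension at most $n$. One way to get this:
\begin{itemize}
\item Let $L=\Th[1]{X_n}$ and $\tau'=\{v : \alpha^{-1}(v)\subseteq\sigma\}$, which is a face of $\bar\alpha(\sigma)$. The full subcomplex $\enlargement{L}\setminus\sigma$ is exactly the enlargement of $L\setminus\tau'$ along the restricted map, hence homotopy equivalent to $L\setminus\tau'$.
\item $L\setminus\tau'$ is the union of the simplices spanned by the sets $Q^{(0)}\setminus\tau'$, $Q$ a cube. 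This family is closed under intersection.
\item Each nonempty member $S$ equals $Q_S^{(0)}\setminus\tau'$ for the minimal cube $Q_S\supseteq S$. So strict chains of members give strict chains of cubes, and have length at most $n+1$.
\item By a Quillen fibre (crosscut) argument, $L\setminus\tau'$ is homotopy equivalent to the order complex of this family, which has dimension at most $n$.
\end{itemize}
This uses $H$ deep enough that cubes and their intersections behave as in the Davis complex. The same description of $\enlargement{L}\setminus\sigma$ also handles arbitrary $\alpha$ in the connectivity claim, more cleanly than your extra-vertex argument.

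\textbf{The connectivity step is incomplete.} You identify the right ingredient but leave the gluing unproved. Also, ``connected precisely when links minus simplices are connected'' is only a sufficient condition. Here is one way to close it.
\begin{itemize}
\item It suffices to take $\tau=C^{(0)}$, because every vertex of $C$ has an $X_n$-neighbour outside $C$ (as $\Delta_n$ is not a simplex).
\item Suppose $C$ has label $\rho$ at $w\in C^{(0)}$. The $X_n$-neighbours of $w$ outside $C$ correspond to the vertices of $\Delta_n\setminus\rho$. Two of them with adjacent labels are diagonal in a square at $w$, so they are adjacent in the thickening. Hence they form a connected set $N_w$ in $\Th[1]{X_n}\setminus C^{(0)}$.
\item For an edge $ww'$ of $C$ labelled $s\in\rho$, the sets $N_w$ and $N_{w'}$ are joined by an edge of $X_n$ provided $s$ has a neighbour $x\notin\rho$. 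This is a second use of the inductive hypothesis: otherwise $s$ would be an isolated vertex of $\Delta_n\setminus(\rho\setminus\{s\})$.
\item Hence $\bigcup_w N_w$ is connected. Any edge path of $X_n$ between vertices outside $C$ can then be rerouted around its excursions into $C$.
\end{itemize}

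A minor point: your parenthetical alternative of a flag-no-square surface as $\Delta_2$ would shift the indexing. The cycle of length at least $5$ is the right base case.
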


For the thickening, the authors consider in particular the map $\thickmap \colon Y \to \skel0\scx$, where
\[
	Y = \set{ (v,w) \in \skel0 \scx \times \skel0 \scx : d(v,w) \geq 2 },
\]
and $\thickmap(v,w)=v$ is the projection onto the first factor. The map $\thickmap$ is surjective as long as $\scx$ is not a cone, that is, no vertex of $\scx$ is adjacent to every other vertex of $\scx$.

The following was implicitly proved in \cite{LMSSW} for $k=0$, and its generalisation to every natural $k$ has a similar proof.

\begin{proposition}\label{prop:alpha-thickening-k-fibres}
	Let $\scx$ be a simplicial complex, and let $k \in \NN$. Assume that $\scx$ is not a cone, and has no $k$-disconnecting simplices. Then the Coxeter group associated to the enlargement $\enlargement[\thickmap] \scx$ virtually fibres with kernel of type $\finitetype[k+1]$.
\end{proposition}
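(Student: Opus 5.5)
We will play the coloured JNW game on the graph $\Gamma$, the $1$-skeleton of $\varscx=\enlargement[\thickmap]\scx$ (which is flag, being an enlargement of a flag complex), with a suitable system of moves. We then show that every state in the resulting orbit is $k$-legal and conclude by the Bestvina--Brady theorem. Since the descending and ascending links at a vertex of $X'$ are the full subcomplexes of $\varscx$ spanned by the $I$-vertices and the $O$-vertices, it suffices to produce a sparse system of moves and an initial state such that, for every state $S$ in the orbit, both $\varscx_I$ and $\varscx_O$ are $k$-connected.

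\textbf{Moves.} Each vertex of $\varscx$ is a pair $(v,w)$ with $d(v,w)\geq 2$ in $\scx$. Assign to $(v,w)$ the move labelled by the unordered pair $\{v,w\}$, or a variant that also records which coordinate comes first. Sparsity holds because $(v,w)$ and $(w,v)$ lie over the non-adjacent vertices $v$ and $w$ of $\scx$. Two vertices $(v,w)$ and $(v',w')$ sharing a move are therefore either equal or swapped, and in the latter case they are not adjacent. For the initial state we put $(v,w)$ in $I$ if, say, $v<w$ in a fixed total order on $\skel0\scx$. This mirrors \cite{LMSSW}, and we would reuse their choice verbatim if it differs.

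\textbf{Reduction to $\scx$ minus a simplex.} Fix a state $S$ in the orbit and set $A=\thickmap(\varscx_I)\subseteq\skel0\scx$. The key observation is that $\varscx_I$ is the $\thickmap|$-enlargement of the full subcomplex $\scx[A]$: preimages of simplices are still simplices. Hence, by the enlargement lemma, $\varscx_I\simeq\scx[A]$. We then identify the complement $B=\skel0\scx\setminus A$. A vertex $v$ fails to lie in $A$ exactly when every $(v,w)$ is labelled $O$. By the structure of the moves, the pairs $(v,w)$ and $(w,v)$ always receive opposite labels. Consequently $v,v'\notin A$ forces $v,v'$ to be adjacent, since otherwise the fibre over $v$ would contain $(v,v')$, whose partner $(v',v)$ is labelled $O$, making $(v,v')$ an $I$-vertex and putting $v$ in $A$. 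So $B$ is a clique, hence a simplex $\sigma$ (possibly empty) by flagness, and $\scx[A]=\scx\setminus\sigma$. This is $k$-connected by the assumption that $\scx$ is $k$-connected with no $k$-disconnecting simplices. The same argument applies to $\varscx_O$, and non-emptiness of each half is guaranteed because $\scx$ is not a cone.

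\textbf{Main obstacle.} The delicate step is ensuring that the invariant ``$(v,w)$ and $(w,v)$ have opposite labels'' is preserved by the moves along the whole orbit, since applying a move flips labels. If a move is indexed by the unordered pair, it flips both $(v,w)$ and $(w,v)$ together, so the invariant is preserved and the argument above goes through. Should sparsity or the precise \cite{LMSSW} setup instead demand a different colouring, we would adjust it so that moves always flip the two partners simultaneously. With the invariant in hand, Bestvina--Brady yields a kernel of type $\finitetype[k+1]$ for the finite-index subgroup $G'$, which is exactly the asserted virtual fibring.
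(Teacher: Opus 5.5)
Your proposal is correct and follows essentially the paper's proof: colouring by the unordered pairs $\{(v,w),(w,v)\}$ with a balanced initial state, showing that the vertices of $M$ with no $I$-lift (resp.\ no $O$-lift) are pairwise adjacent and so span a simplex $\sigma$, then using that the link is an enlargement of $M\setminus\sigma$ (hence homotopy equivalent to it) and applying Bestvina--Brady. You should drop the hedge about a colouring that ``also records which coordinate comes first'': it would make every colour class a singleton, allow states where both partners are labelled $I$, and so break the partner invariant your argument relies on.
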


\begin{proof}
	Consider the colouring of the vertices of $\enlargement[\thickmap] \scx$ obtained by partitioning them into pairs of the form $\set{(v,w), (w,v)}$. A \emph{balanced state} is a state where vertices of the same colour have opposite states. It is straightforward to see that the orbit consists of all the balanced states. Choose any balanced state as the initial state.

	We want to prove that the orbit is $k$-legal, that is, all the ascending and descending links are $k$-connected. We show it for the ascending links, as for the descending links it is analogous.

	Fix a state of the orbit, and let $Z$ be the subcomplex of $\scx$ spanned by vertices $v$ such that $(v,w)$ has status $\Out$ for some $w$. Now note that any two vertices $v,w$ of $\scx \setminus Z$ are necessarily connected by an edge. If that were not the case, then $(v,w), (w,v)$ would both have status $\In$, which contradicts the assumption that we started with a balanced state.

	Hence, $Z$ is of the form $\scx \setminus \sigma$ for some simplex $\sigma$, and is by hypothesis $k$-connected. Since the ascending link is a thickening of $Z$ and is therefore homotopy equivalent to it, we conclude by the main result of \cite{BestvinaBrady}.
\end{proof}

We can generalise this result to any enlargement that is large enough. To formalize this, for a given simplicial complex $\scx$ and for every $\thicksize \in \NN$ define $\pi_\thicksize \colon \skel0\scx \times \range{\thicksize} \to \skel0 \scx$ to be the projection.

\begin{theorem} \label{verythick}
	Let $\scx$ be a simplicial complex, and let $k \in \NN $. Assume that $\scx$ is not a cone, and has no $k$-disconnecting simplices. Then for $\thicksize \geq \cardinality{\skel0\scx}$ the Coxeter group associated to the enlargement $\enlargement[\pi_\thicksize] M$ virtually fibres with kernel of type $\finitetype[k+1]$.
\end{theorem}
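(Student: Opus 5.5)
Let $n = \cardinality{\skel0\scx}$. The plan is to reduce to \cref{prop:alpha-thickening-k-fibres} by showing that $\enlargement[\thickmap]\scx$ embeds as a "retract-like" piece of $\enlargement[\pi_\thicksize]\scx$. A more direct route is to imitate its proof with a suitable colouring.

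Concretely, fix for each vertex $v$ an injection of the set $\{w : d(v,w)\ge 2\}$ into $\range{\thicksize}$; this is possible since that set has at most $n\le \thicksize$ elements. Label the vertex $(v,j)$ of the enlargement by $(v,w)$ when $j$ is the image of $w$. We then pair $(v,j)$ with $(w,j')$ whenever they are labelled $(v,w)$ and $(w,v)$. The remaining, unpaired vertices each receive their own colour.

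The colouring must be sparse. Paired vertices $(v,j),(w,j')$ are non-adjacent in $\enlargement[\pi_\thicksize]\scx$ because $v,w$ are not adjacent in $\scx$ (as $d(v,w)\ge2$). Singletons pose no issue.

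We choose as initial state a balanced state on pairs and declare every singleton vertex $\Out$. The orbit consists of the states where each pair is balanced and the singletons are arbitrary. We then verify $k$-legality.

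For a given state, the ascending link is the full subcomplex of the enlargement spanned by the $\Out$ vertices. By the Remark on enlargements, this is the enlargement of the full subcomplex $Z\subseteq\scx$ spanned by vertices having at least one $\Out$ preimage. It is therefore homotopy equivalent to $Z$ by the Lemma. As in the proof of \cref{prop:alpha-thickening-k-fibres}, if $v,w\notin Z$ with $d(v,w)\ge 2$, then both members of the pair labelled $(v,w),(w,v)$ are $\In$, which contradicts balance. Hence $\scx\setminus Z$ spans a simplex $\sigma$, and $Z=\scx\setminus\sigma$ is $k$-connected by hypothesis. The case $\sigma=\emptyset$ is covered by the assumption that $\scx$ itself is $k$-connected.

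Descending links are handled symmetrically. Here there is one subtlety: singletons may all be $\Out$, but the pairs still force the complement of the $\In$-support to be a simplex. The non-cone hypothesis ensures every vertex has at least one partner, so $Z$ is well defined and nonempty. Bestvina--Brady then gives type $\finitetype[k+1]$.

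The main care point is sparseness and well-definedness of the colouring when $\thicksize$ exceeds what the pairing needs. This is exactly where $\thicksize\ge n$ enters: it guarantees an injection of each vertex's non-neighbours into $\range{\thicksize}$. Extra copies are harmless because they only add singleton colours.
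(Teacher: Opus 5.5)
Your proposal is correct and is essentially the paper's proof. Your per-vertex injections of non-neighbours into $\range{\thicksize}$ are exactly the paper's injection $\iota\colon Y\to\skel0\scx\times\range{\thicksize}$ with $\pi_\thicksize\circ\iota=\thickmap$, the colouring (pairs on the image of $\iota$, singleton colours elsewhere) and the balanced initial state are the same, and your explicit checks of sparseness, of the orbit, and of defining $Z$ via all $\Out$ preimages (including singletons) spell out what the paper compresses into ``the same argument''; as in the paper, your step from ``pairwise adjacent'' to ``spans a simplex'' tacitly uses that $\scx$ is flag.
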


\begin{proof}
	Let $\thickmap \colon Y \to \skel0\scx$ be the enlargement map used in \cref{prop:alpha-thickening-k-fibres}.
	Since $\thicksize \geq \cardinality{\skel0\scx}$, we can find an injective map $\iota \colon Y \to \skel0\scx \times \range \thicksize$ such that $\pi_\thicksize \circ \iota = \thickmap$. This uniquely extends to an embedding $\bar \iota \colon \enlargement[\thickmap]\scx \to \enlargement[\pi_\thicksize]\scx$, satisfying $\overline{\pi_\thicksize} \circ \bar \iota = \bar\thickmap$.

	We define the following colouring on the vertices of $\enlargement[\pi_\thicksize] \scx$. On the image of $\iota$, we use the colouring induced by the colouring on $\enlargement[\thickmap]\scx$ described in the proof of \Cref{prop:alpha-thickening-k-fibres}. All other vertices are coloured with distinct colours, i.e.~a vertex outside the image of $\iota$ does not have the same colour as any other vertex. We choose again any balanced state as initial state.

	The same argument as in \Cref{prop:alpha-thickening-k-fibres} allows us to conclude that all ascending and descending links are $k$-connected.
\end{proof}

\section{The coning procedure}\label{sec:coning}

In this section we present an algorithm for producing the simplicial complexes needed for the proofs of \cref{main:fp-fibring} and \cref{main:fp-fibring-qi}. We will refer to the fundamental groups $\pi_1(M \setminus \sigma)$ as $\sigma$ ranges over all possible simplices of $M$, so it will be inconvenient to consider based loops. Hence, for the purposes of this section, when we discuss generating a group, we will often consider collections of \emph{free homotopy classes} of loops that \emph{normally} generate $\pi_1(M \setminus \sigma)$. By abuse of notation, we use the same symbol for a loop and the conjugacy class of the element in $\pi_1(M \setminus \sigma)$ that it represents.

Given a simplicial complex $N$, $N \setminus \sigma$ refers to the induced subcomplex on the set of vertices $N^{(0)} \setminus \sigma^{(0)} $. Recall that a subcomplex $M \subset N$ is full if, whenever $n+1$ points span an $n$-simplex in $N$, they span an $n$-simplex in $M$. The goal of this section is to prove the following:

\begin{theorem} \label{coning-procedure}
	Let $M$ be a \fns simplicial complex of homological dimension $d \geq 2$ with no $0$-disconnecting simplices. Then there is a \fns simplicial complex $N$, also of homological dimension $d$, which contains $M$ as a full subcomplex and has no $1$-disconnecting simplices.
\end{theorem}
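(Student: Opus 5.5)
The plan is to build $N$ from $M$ by a finite sequence of ``collar-cones''. Each one kills a conjugacy class of loops in the fundamental groups $\pi_1(M\setminus\sigma)$ without creating induced squares, and without creating new homology in degrees $\geq 2$, so the homological dimension stays $d$. Since $M$ is finite and has no $0$-disconnecting simplices, each $M\setminus\sigma$ is connected. Each $\pi_1(M\setminus\sigma)$ is normally generated by finitely many edge loops, so there is a finite set $\mathcal{L}$ of edge loops in $M$ with the following property: for every simplex $\sigma$ (including $\sigma=\emptyset$), the loops of $\mathcal{L}$ avoiding $\sigma$ normally generate $\pi_1(M\setminus\sigma)$. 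A natural choice is all induced cycles of $M$ of length at most some constant, together with cycles coming from a presentation. A cleaner choice, since $M$ is flag: take the boundaries of triangles, which bound, plus all induced cycles of bounded length, with length $\geq 5$ automatically by the no-square condition.

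\textbf{Collar-coning a loop.} Given an induced cycle $\gamma=(v_1,\dots,v_\ell)$ in the current complex, with $\ell\geq 5$, we do not cone it directly. Coning directly would produce squares of the form apex, $v_i$, $x$, $v_j$ whenever $v_i,v_j$ have a common neighbour $x$ off the cycle. Instead we first attach a collar: an annulus triangulated as $\gamma\times[0,1]$, with a new copy $\gamma'=(v_1',\dots,v_\ell')$ on the other side and each new vertex $v_i'$ adjacent only to $v_{i-1},v_i,v_{i+1}$ and $v'_{i\pm1}$. To make the collar thick enough, we iterate this two or three times, and then cone off the outermost copy with a new apex $c$. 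The result is a disc glued along $\gamma$. Flagness holds by construction once we fill all cliques. For no-squares: any induced $4$-cycle must use new vertices. A new vertex has all of its neighbours within a bounded window of $\gamma$ or the next layers. Using $\ell\geq5$ and the fact that $\gamma$ is induced (so $v_i,v_j$ with $|i-j|\geq2$ are nonadjacent, with no common neighbour inside the collar), we check the finitely many configurations. A $4$-cycle through old vertices $x$ would need two collar vertices adjacent to $x$, but collar vertices are adjacent only to cycle vertices, never to $x$. That handles the case $x\notin\gamma$, and the case $x\in\gamma$ is local. Fullness of $M$ is immediate, since we add no edges between old vertices.

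\textbf{Homology and disconnecting simplices.} Gluing a disc along a loop changes $H_2$ by at most adding classes when $\gamma$ is already null-homologous. To stay within homological dimension $d\geq2$ we only need $H_i=0$ for $i>d$, and any extra $H_2$ lies in degree $2\leq d$, so this is fine. We also need that $N$ does not destroy nontrivial $H_d$ of $M$, or of some $M\setminus\sigma$ as in the vcd formula. This follows from Mayer--Vietoris, because the attached discs are $2$-dimensional and $d\geq2$. For $d=2$ a little care is required: discs attached along loops nonzero in $H_1$ kill $H_1$, not $H_2$, so $H_2$ can only grow. The main work is showing that $N$ has no $1$-disconnecting simplices. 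Take a simplex $\tau$ of $N$. Then $N\setminus\tau$ deformation retracts, modulo the discs that $\tau$ meets, onto $M\setminus(\tau\cap M)$ with the untouched discs attached. Each disc whose apex or collar meets $\tau$ loses a simplex, but a punctured collared disc still retracts onto its boundary cycle minus at most a few vertices, so it contributes only a free-factor/contractible piece and connectivity is preserved. Discs avoiding $\tau$ whose boundary avoids $\tau\cap M$ kill their loops. Hence $\pi_1(N\setminus\tau)$ is generated by images of $\pi_1(M\setminus\sigma)$, with $\sigma=\tau\cap M$, modulo the loops of $\mathcal{L}$ avoiding $\sigma$ whose discs avoid $\tau$.

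\textbf{Main obstacle.} The difficulty is that $\tau$ can hit a collar vertex or an apex, so the corresponding disc is lost. This is why a simplex of $N$ meets at most one disc's interior: distinct discs are disjoint off $M$, and each disc's interior is mutually nonadjacent to other discs. The fix I would try first is to use two disjoint copies of each collar-cone for every loop in $\mathcal{L}$. Then $\tau$ can meet at most one copy, and the twin copy still kills the loop, which finishes the argument.
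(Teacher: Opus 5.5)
Your global strategy is the paper's. You pick finitely many full cycles that normally generate every $\pi_1(M\setminus\sigma)$ and collar-cone each of them twice, which is exactly the variant in the remark after the paper's proof. You then use that a simplex of $N$ meets the interior of at most one cone, so a twin copy always survives.

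The genuine gap is the collar gadget itself, which is the one delicate point. As you specify it, $N$ is \emph{not} flag-no-square. You make each new vertex $v_i'$ adjacent to the three consecutive vertices $v_{i-1},v_i,v_{i+1}$.

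\begin{itemize}
\item Inside the gadget, with the apex $c$ joined to all $v_j'$, the $4$-cycle $v_{i-1}'\,v_i\,v_{i+1}'\,c$ is induced, since $v_i\not\sim c$ and $v_{i-1}'\not\sim v_{i+1}'$. If you iterate the collar, $v_{i-1}'\,v_i\,v_{i+1}'\,v_i''$ is induced for the same reason, so adding layers does not help.
\item The gadget also clashes with $M$. The vertex $v_i'$ is a cone on the path $v_{i-1}v_iv_{i+1}$. So for every $x\in M\setminus\{v_i\}$ adjacent to both $v_{i-1}$ and $v_{i+1}$, the cycle $v_{i-1}\,x\,v_{i+1}\,v_i'$ is induced. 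Such $x$ are ubiquitous; for example $x=p$ when $\gamma$ is the link of a vertex $p$ in a triangulated surface. This is the very failure you diagnosed for naive coning, reappearing one level down.
\item Your check that ``a $4$-cycle through an old vertex $x$ would need two collar vertices adjacent to $x$'' is where the argument slips. Such a square uses the two neighbours of $x$ on $\gamma$ and a single collar vertex.
\item Each quadrilateral $v_iv_{i+1}v_{i+1}'v_i'$ receives both diagonals, so the flag completion contains tetrahedra and the collar is not an annulus.
\end{itemize}

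The fix is the paper's collared cone. Use a single layer in which each quadrilateral gets exactly one diagonal and no two diagonals share an endpoint, say the diagonals $v_iv_{i+1}'$. Then every collar vertex is adjacent to exactly two \emph{adjacent} vertices of $\gamma$. Finally, cone the inner $\ell$-cycle. Two things now hold:
\begin{itemize}
\item Any $4$-cycle in which a collar vertex is flanked by two old vertices has a chord.
\item Two nonadjacent neighbours of the apex have no common neighbour besides the apex and, at distance $2$, the collar vertex between them, which is itself adjacent to the apex.
\end{itemize}
This gadget is the dual L\"obell polyhedron minus a vertex star. With it, the rest of your argument goes through as in the paper: retracting a punctured cone onto part of its boundary cycle, Mayer--Vietoris for the homology, and twin copies for simple connectivity.

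A smaller omission: you should justify that $\pi_1(M\setminus\sigma)$ is normally generated by \emph{induced} cycles of length at least $5$, as in the paper's lemma. Split edge loops into embedded cycles, cut along chords, and use flagness and the absence of squares to dispose of lengths $3$ and $4$. ``Cycles coming from a presentation'' need not be induced. Collar-coning a cycle with a chord $v_iv_{i+2}$ creates the induced square $v_{i+1}'\,v_i\,v_{i+2}\,v_{i+2}'$.
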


The idea is to cone off generators for $\pi_1(M \setminus \sigma)$ as $\sigma$ ranges over all subsimplices of $M$. This must be done with some care to ensure that no squares are created. We use a device which we call a ``collared cone''.

\begin{definition}
	For $n \geq 5$, define the \emph{collared cone} on an $n$-cycle to be the \fns triangulated disk $C$ constructed as following:
	\begin{enumerate}
		\item Start from an annulus, and subdivide it into $n$ squares, so that its boundary components are $n$-cycles. This is the \emph{collar} of the collared cone.
		\item Add a diagonal to each square such that no two diagonals have the same endpoint. %

		\item Finally cone one of the boundary components of the collar to a point, see \cref{fig:collared-cone}.
	\end{enumerate}
	Let $M$ be a simplicial complex, and let $\gamma$ be an $n$-cycle in $M$. \emph{`Collar-coning the $n$-cycle $\gamma$'} refers to the procedure of attaching the boundary of the collared cone $C$ to $\gamma_n$.
\end{definition}

\begin{remark}\label{rmk:Lobell}
	The collared cone on the pentagon is the complement of the star of a vertex of an icosahedron, see \cref{fig:truncated-icosahedron}. More generally, the collared cone on the $n$-gon is the complement of the star of a vertex in the dual of the Löbell polyhedron $L(n)$ \cite{Lobell}. The crucial property that will allow our construction to work is the fact that the Löbell polyhedra can be represented as right-angled polyhedra in $\mathbb{H}^3$.
\end{remark}

\begin{figure}
	\begin{tikzpicture}[scale=0.8, line width=0.20mm]
	\usetikzlibrary{calc}

	\def\n{16}

	\def\outr{4}

	\def\inr{3}

	\foreach \ii in {1,...,\n} {
			\def\ang{360/\n};
			\def\curang{360/\n*\ii+\ang/2+90};
			\def\stagang{\curang};
			\draw[thick] (\stagang:\outr) -- (\stagang+\ang:\outr);
			\draw[thick] (\stagang:\outr) -- (\curang:\inr);
			\draw[thick] (\curang:\inr) -- (\curang+\ang:\inr);
			\draw[thick] (\stagang:\outr) -- (\curang+\ang:\inr);
			\draw[thick] (\curang:\inr) -- (0,0);
		};
\end{tikzpicture}
	\caption{A collared cone.}
	\label{fig:collared-cone}
\end{figure}

\begin{figure}
	\centering
	\begin{minipage}{0.48\linewidth}
		\centering
		\begin{tikzpicture}[tdplot_main_coords, scale=2.0, line join=round]

\def\phi{1.618033988749895}

\coordinate (V3)  at ( 0,  1, -\phi);
\coordinate (V4)  at ( 0, -1, -\phi);
\coordinate (V5)  at ( 1,  \phi,  0);
\coordinate (V6)  at (-1,  \phi,  0);
\coordinate (V7)  at ( 1, -\phi,  0);
\coordinate (V9)  at ( \phi,  0,  1);
\coordinate (V10) at (-\phi,  0,  1);
\coordinate (V11) at ( \phi,  0, -1);

\draw[very thick, color=blue] (V5) -- (V11) -- (V7); 
\filldraw[
    shade,
    inner color=red!5,
    outer color=red!15,
    draw=red,
    very thick
] (V3) -- (V5) -- (V9) -- (V7) -- (V4) -- cycle;
\draw[very thick] (V5) -- (V10)  (V7) -- (V10)  (V9) -- (V10);
\draw (V3) -- (V10)  (V4) -- (V10);

  \fill (V3) circle (0.02);
  \fill (V4) circle (0.02);
  \fill (V5) circle (0.02);
  \fill (V7) circle (0.02);
  \fill (V9) circle (0.02);
  \fill (V10) circle (0.02);
  \fill (V11) circle (0.02);

\fill[color=white] (V6) circle (0.02);

\end{tikzpicture}
	\end{minipage}\hfill
	\begin{minipage}{0.48\linewidth}
		\centering
		\begin{tikzpicture}[tdplot_main_coords, scale=2.0, line join=round]

\def\phi{1.618033988749895}

\coordinate (V1)  at ( 0,  1,  \phi);
\coordinate (V2)  at ( 0, -1,  \phi);
\coordinate (V3)  at ( 0,  1, -\phi);
\coordinate (V4)  at ( 0, -1, -\phi);
\coordinate (V5)  at ( 1,  \phi,  0);
\coordinate (V6)  at (-1,  \phi,  0);
\coordinate (V7)  at ( 1, -\phi,  0);
\coordinate (V8)  at (-1, -\phi,  0);
\coordinate (V9)  at ( \phi,  0,  1);
\coordinate (V10) at (-\phi,  0,  1);
\coordinate (V11) at ( \phi,  0, -1);
\coordinate (V12) at (-\phi,  0, -1);

\draw[very thick, color=blue] (V5) -- (V11) -- (V7); 
\filldraw[
    shade,
    inner color=red!5,
    outer color=red!15,
    draw=red,
    very thick
] (V3) -- (V5) -- (V9) -- (V7) -- (V4) -- cycle;
\draw[very thick] (V1) -- (V2) (V1) -- (V5) (V1) -- (V6) (V1) -- (V9) (V1) -- (V10);
\draw[very thick] (V2) -- (V7) (V2) -- (V8) (V2) -- (V9) (V2) -- (V10);
\draw (V3) -- (V6) (V3) -- (V12); %
\draw (V4) -- (V8) (V4) -- (V12); %
\draw (V5) -- (V6); %
\draw (V6) -- (V10) (V6) -- (V12);
\draw (V7) -- (V8); %
\draw (V8) -- (V10) (V8) -- (V12);
\draw (V10) -- (V12);

\draw[very thick] (V5) -- (V6) (V6) -- (V10) (V8) -- (V10) (V7) -- (V8);

\foreach \i in {1,...,12}{
  \fill (V\i) circle (0.02);
}
\fill (V12) circle (0.02);

\end{tikzpicture}
	\end{minipage}
	\caption{On the left, a standard coning on a pentagon, potentially generating a square with the already existing blue edges. On the right, a representation of the collar-cone on a pentagon as a truncated icosahedron. This does not introduce any square.}
	\label{fig:truncated-icosahedron}
\end{figure}

We now show that collar-coning an $n$-cycle preserves the desired properties of $M$. %

\begin{lemma} \label{cone}
	If $M$ is a \fns simplicial complex which defines a Coxeter group of $\vcd=d \geq 3$, then collar-coning any full $n$-cycle $\gamma_n$ for $n \geq 5$ results in a simplicial complex $M \cup C$ which
	\begin{enumerate}
		\item \label{fnscone} is \fns;
		\item \label{vcdcone} defines a Coxeter group of $\vcd = d$;
		\item \label{fullsubcx} contains $M$ as a full subcomplex.

	\end{enumerate}
\end{lemma}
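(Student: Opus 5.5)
Write $C$ for the collared cone glued along $\gamma=\gamma_n=(x_1,\dots,x_n)$. Its new vertices are the inner cycle $y_1,\dots,y_n$ and the cone point $c$. The edges of the collar are the rungs $x_iy_i$ and the diagonals; with a consistent choice, the diagonals are $x_iy_{i+1}$. The key feature is that no vertex of $M$ is adjacent to $c$. Each $x_i$ is adjacent in $C$ only to $x_{i\pm1}$, $y_i$ and $y_{i+1}$, and each $y_j$ is adjacent only to $y_{j\pm1}$, $x_{j-1}$, $x_j$ and $c$. I would prove the three items in the order (3), (1), (2).

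\emph{(3)} No new edge joins two vertices of $M$, since all edges of $C$ between vertices of $\gamma$ are edges of $\gamma$. Hence $M$ is the induced subcomplex on its vertex set, and $M\cup C$ (taken flag) induces $M$, because $\gamma$ is a full cycle of $M$.

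\emph{(1)} For flagness, I would enumerate the $3$-cliques of the new $1$-skeleton that contain a new vertex. Such a clique cannot contain two vertices of $M$ unless they are $x_i,x_{i+1}$, and then the only possible third vertex is $y_{i+1}$, which gives a triangle of $C$. The remaining cases are triangles of $C$ by direct inspection, using $n\ge5$ to rule out accidental adjacencies such as $y_j\sim y_{j+2}$. There are no $4$-cliques, because $C$ is $2$-dimensional and flag. For no-squares, take an induced $4$-cycle $Q$. If $Q\subset M$, it contradicts the hypothesis. If $Q\subset C$, note that $C$ is flag-no-square: it is the complement of a vertex star in the dual of a L\"obell polyhedron (Remark~\ref{rmk:Lobell}), or one can check this directly for $n\ge5$. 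Otherwise $Q$ meets both $M\setminus\gamma$ and $C\setminus\gamma$. Since $\gamma$ separates these sets, $Q$ passes through two non-adjacent vertices of $\gamma$, say $x_i,x_j$, each adjacent to a new vertex $u$ of $Q$. But the neighbours of $u$ on $\gamma$ are at most two consecutive vertices, so $x_i\sim x_j$, a contradiction. The one remaining configuration is that $Q$ uses two vertices of $\gamma$ that are adjacent in $Q$; this is handled the same way, using that $\gamma$ is full in $M$ and $n\ge5$.

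\emph{(2)} By Davis' formula, $\vcd W_L=\max\{m:\bar H^{m-1}(L\setminus\sigma)\neq0\}$. The complex $C$ is a contractible disc glued along the circle $\gamma$. For any simplex $\sigma$, $(M\cup C)\setminus\sigma=(M\setminus\sigma)\cup(C\setminus\sigma)$, and the intersection is $\gamma\setminus\sigma$, which is either the whole circle or a path. Likewise $C\setminus\sigma$ is a disc with some vertices removed, and one checks case by case that it is contractible, or is $\gamma$ together with a contractible piece. Mayer--Vietoris then shows that $\bar H^{m}$ of the union agrees with $\bar H^m(M\setminus\sigma')$ in degrees $m\ge2$, where $\sigma'=\sigma\cap M$. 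The only possible changes are in degrees $0$ and $1$, corresponding to killing the class of $\gamma$. Since $d\ge3$, the relevant degree $d-1\ge2$ is unaffected. Every simplex of $M$ is a simplex of $M\cup C$, which gives $\vcd\ge d$. Conversely, no new nonzero group appears in degree $\ge d$, which gives $\vcd\le d$.

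The main obstacle is the Mayer--Vietoris bookkeeping in (2). When $\sigma$ is a new simplex, such as $\{y_j,c\}$ or $\{x_i,y_{i+1},y_i\}$, removing it can make $C\setminus\sigma$ non-contractible (for instance, an annulus when $c$ is removed). I would handle this by observing that $C\setminus\sigma$ always has cohomological dimension at most $1$ and deformation retracts onto $\gamma\setminus\sigma$, or onto a subgraph attached to it. The effect on the union is then confined to degrees at most $2$, and a degree-$2$ class would have to come from $H^1$ of the intersection, which is a subset of a circle. Since $d\ge3$, this still gives no new top-degree class; if $d-1=2$ needs care, I would check the connecting map explicitly.
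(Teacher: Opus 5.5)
Your arguments for (1) and (3) are sound and essentially the paper's. Your separation argument for induced squares meeting both $M\setminus\gamma$ and $C\setminus\gamma$ is more explicit than the paper's. Take vertices $m\in M\setminus\gamma$ and $u\in C\setminus\gamma$ of $Q$. They are non-adjacent, so they are opposite in $Q$, which forces the other two vertices into $\gamma$ as neighbours of $u$. Consequently the ``remaining configuration'' you mention never arises.

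For (2) you take a different route. The paper splits into two cases. If $\sigma$ meets $C$, it shows $C\setminus\sigma$ deformation retracts onto $\gamma\setminus\sigma$, so $(M\cup C)\setminus\sigma$ retracts onto $M\setminus(M\cap\sigma)$. If $\sigma$ misses $C$, it runs Mayer--Vietoris with $C$ contractible. You instead use the single decomposition $(M\setminus\sigma')\cup(C\setminus\sigma)$ over $\gamma\setminus\sigma$ for every $\sigma$.

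That is fine, but one claim in it is false. The cohomology of the union does \emph{not} agree with that of $M\setminus\sigma'$ in degree $2$, and changes are not confined to degrees $0$ and $1$. If $\sigma$ misses $C$ and $\gamma$ bounds in $M\setminus\sigma$ (for instance, bounds a disc, producing a $2$-sphere), gluing $C$ creates a new degree-$2$ class: the image of $H^1(\gamma)$ under the connecting map. Your lower bound for $d=3$ rests on this claim, and you leave it at ``check the connecting map explicitly''.

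The repair needs no case analysis. $C\setminus\sigma$ has dimension at most $2$ and $\gamma\setminus\sigma$ has dimension at most $1$. Mayer--Vietoris therefore gives $H^m((M\cup C)\setminus\sigma)\cong H^m(M\setminus\sigma')$ for every $m\ge 3$. In degree $2$ it gives a surjection $H^2((M\cup C)\setminus\sigma)\to H^2(M\setminus\sigma')\oplus H^2(C\setminus\sigma)$, since $H^2(\gamma\setminus\sigma)=0$.
\begin{itemize}
\item The isomorphism yields $\mathrm{vcd}\le d$, because $d\ge 3$.
\item Applied to $\sigma_0$, the isomorphism (if $d\ge 4$) or the surjection (if $d=3$) yields $\mathrm{vcd}\ge d$. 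Extra degree-$2$ classes only help.
\end{itemize}
In particular, your worry about $C\setminus\sigma$ being an annulus or otherwise non-contractible is irrelevant.

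Each route buys something different. Your corrected argument is uniform and purely dimensional. It avoids checking that $C\setminus\tau$ retracts onto $\gamma\setminus\tau$ for every shape of $\tau$; for example, $C\setminus\{y_j\}$ is not contractible, though it still retracts onto $\gamma$. The paper's argument proves more: a homotopy equivalence $(M\cup C)\setminus\sigma\simeq M\setminus(M\cap\sigma)$ whenever $\sigma$ meets $C$. That also controls $\pi_1$, and it is reused in the proof of Theorem~\ref{coning-procedure} to get simple connectivity of $N\setminus\sigma$. A purely cohomological argument would not supply that.
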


\begin{proof}
	\cref{fnscone}: To see that the new complex is flag, note that no added vertex in $C$ is attached to more than 2 vertices of $M$, and the corresponding triangles have been added. The cone itself is a flag complex.

	To see that the new complex has no squares, note that the centre of the collared cone does not appear in a $4$-cycle since, among its neighbours, any two which have a second neighbouring vertex in common are in fact joined by an edge. The neighbours of the centre similarly have no neighbouring vertex in common apart from the centre unless they are adjacent.

	\cref{vcdcone}: We first show that for any simplex $\sigma \in M \cup C$, the homology of $(M \cup C) \setminus \sigma$ vanishes in dimension $k \geq d$. We distinguish two cases.

	Suppose first that $\sigma \cap C \neq \varnothing$. By construction, $\sigma \cap C$ is a single non-empty simplex $\tau$, and $\tau \cap \partial C$ is either a simplex or empty.

	Recall that by definition $C \setminus \tau = C \setminus N$, where $N$ is the open $1$-neighbourhood of $\tau$.
	Note that $N$ is a $2$-dimensional ball, and $N \cap \partial C$ is either empty or an open segment.
	In both cases, we have that $C \setminus N$ deformation retracts on $\partial C \setminus (N \cap \partial C)$.

	Therefore we have that $(M \cup C) \setminus \sigma$ deformation retracts onto $M \setminus(M \cap \sigma)$. $(M \cap \sigma)$ is a simplex of $M$, so by hypothesis on $M$, the $k$-th homology of $M \setminus(M \cap \sigma)$ vanishes for $k \geq d$.

	Now suppose that $\sigma \cap C = \varnothing$. Then
	\[(M \cup C) \setminus \sigma= (M \setminus \sigma) \cup C\]
	where $C$ is attached by coning the same $n$-cycle, considered as a subset of $M \setminus \sigma$. The Mayer-Vietoris sequence gives
	\[
		\begin{tikzcd}
			\cdots \ar [r] & H_k(\gamma_n) \ar[r] & H_k(C) \oplus H_k(M \setminus \sigma) \ar[r] & H_k((M \setminus \sigma) \cup C) \ar[r] & H_{k-1}(\gamma_n)
		\end{tikzcd}\]
	Since $\gamma_n$ is homeomorphic to a circle and the cone $C$ is contractible, $H_{k-1}(\gamma_n), H_k(\gamma_n),$ and $H_k(C)$ all vanish when $k \geq 3$, so $H_k(M \setminus \sigma) \cong H_k((M \setminus \sigma) \cup C)$ in this range. If, in addition, $k \geq d$, then $H_k(M \setminus \sigma)=0$ by hypothesis, so $H_k((M \setminus \sigma) \cup C)$ vanishes as well, as desired.

	Finally, by hypothesis there exists a simplex $\sigma_0$ of $M$ such that $H_{d-1}(M \setminus \sigma_0) \neq 0$.
	We claim that $H_{d-1}((M \cup C) \setminus \sigma_0)$ is also nontrivial. If $\sigma_0$ intersects $C$, then as above $(M \cup C) \setminus \sigma_0$ deformation retracts onto $M \setminus \sigma_0$. Otherwise, the Mayer-Vietoris sequence yields
	\[
		\begin{tikzcd}
			\cdots \ar [r] & H_{d-1}(\gamma_n) \ar[r] &H_{d-1}(M \setminus \sigma) \ar[r] & H_{d-1}((M \setminus \sigma) \cup C) \ar[r] & H_{d-2}(\gamma_n)
		\end{tikzcd}\]
	As above, $H_{d-1}(\gamma_n)$ must vanish, so $H_{d-1}(M \setminus \sigma)$ injects into $H_{d-1}((M \setminus \sigma) \cup C)$ and we are done.

	\cref{fullsubcx}: A routine check shows that $M$ embeds as a full subcomplex.
\end{proof}

\begin{lemma}\label{full-embedded-generators}
	Let $M$ be a flag-no-square simplicial complex, and let $\gamma_1, \dots, \gamma_k$ be a sequence of full, embedded cycles in $M$. Let $M'$ be the simplicial complex obtained by coning along the $\gamma_i$. Let $\sigma$ be a (possibly empty) simplex in $M$. There exist full embedded cycles $\gamma'_1, \dots, \gamma'_\ell$ in $M \setminus \sigma $ that normally generate $\pi_1(M' \setminus \sigma)$.
\end{lemma}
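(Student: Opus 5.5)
We will show that $\pi_1(M'\setminus\sigma)$ is normally generated by the conjugacy classes of edge-loops of $M\setminus\sigma$ that are short and full. The argument splits into two reductions: first from $M'$ back to $M$, then from arbitrary edge-loops to full embedded cycles.

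\emph{Step 1: the added vertices of $M'$ contribute no new generators.} Write $C_i$ for the collared cone attached along $\gamma_i$. As in the proof of \cref{cone}, $\sigma\cap C_i$ is empty for every $i$, because $\sigma\subset M$ and $C_i\cap M=\gamma_i$ is a full cycle. Hmm — this is not quite right: $\sigma$ may meet $\gamma_i$.
\begin{itemize}
\item If $\sigma$ does not meet $\gamma_i$, then $C_i$ is a disc attached along a cycle of $M\setminus\sigma$.
\item If $\sigma$ meets $\gamma_i$, then $\sigma\cap\gamma_i$ is a vertex or an edge. The argument of \cref{cone} then shows that $C_i\setminus\sigma$ deformation retracts onto the arc $\gamma_i\setminus\sigma$, relative to that arc.
\end{itemize}
Applying van Kampen one cone at a time, $\pi_1(M'\setminus\sigma)$ is a quotient of $\pi_1(M\setminus\sigma)$, the quotient being by the $\gamma_i$ disjoint from $\sigma$. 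Hence any set of loops in $M\setminus\sigma$ normally generating $\pi_1(M\setminus\sigma)$ also normally generates $\pi_1(M'\setminus\sigma)$.

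\emph{Step 2: reduce to triangles.} It remains to find full embedded cycles normally generating $\pi_1(M\setminus\sigma)$, where $M\setminus\sigma$ is an induced, hence flag, subcomplex. It is not necessarily square-free as an induced object, but it has no induced $4$-cycles, since it is a full subcomplex of a flag-no-square complex. The fundamental group of the complex is generated, after choosing a maximal tree in each component, by edge-loops. Every edge-loop is freely homotopic to a product, up to conjugation, of loops of the form ``tree path, edge, tree path''.

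\emph{Step 3: replace edge-loops by full embedded cycles.} For this I would use induction on length with a shortcut argument. Take a closed edge-path $c$.
\begin{itemize}
\item If $c$ repeats a vertex, split it into two shorter closed paths, whose product gives $c$ up to conjugation.
\item If $c$ is embedded but not full, it has a chord, that is, an edge of $M\setminus\sigma$ between two non-consecutive vertices, since $M\setminus\sigma$ is full in $M$. The chord splits $c$ into two shorter cycles whose product is $c$.
\item Cycles of length $3$ are full, and bound $2$-simplices by flagness, so they are trivial and can be discarded. There is no need to worry about length $4$ specifically; it is handled by the same chord splitting.
\end{itemize}
Iterating, every loop is a product of conjugates of full embedded cycles of length at least $4$ in $M\setminus\sigma$. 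Finiteness of $M$ gives finitely many of them, $\gamma'_1,\dots,\gamma'_\ell$.

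\emph{Main obstacle.} The subtle point is Step 1 when $\sigma$ meets some $\gamma_i$. The key there is that removing $\sigma$ from a collared cone leaves something retracting onto $\gamma_i\setminus\sigma$, so no new $\pi_1$ appears and no relation is lost beyond those that already come from $M$.
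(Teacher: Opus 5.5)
Your proposal is correct and follows essentially the same route as the paper: split closed edge-paths at repeated vertices and at chords until only full embedded cycles remain, with your strong induction on length playing the role of the paper's decreasing quantity $\sum 3^{|\gamma'|}$, and then push these cycles forward along $\pi_1(M\setminus\sigma)\to\pi_1(M'\setminus\sigma)$. The one difference is that the paper only asserts that this map is surjective, whereas your Step 1 justifies it by splitting on whether $\sigma$ meets $\gamma_i$ (fullness forces $\sigma\cap\gamma_i$ to be at most an edge, so $C_i\setminus\sigma$ retracts onto the arc $\gamma_i\setminus\sigma$); in a write-up, delete the retracted opening claim of Step 1, and say that a $3$-cycle in a flag complex spans a $2$-simplex (so it is not full but null-homotopic) rather than calling it full.
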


\begin{remark}
	In the lemma above, the cycles $\gamma_1, \dots, \gamma_k$ are not necessarily distinct. If a cycle appears more than once, then multiple collared cones are attached to it in $M'$.
\end{remark}

\begin{proof}
	Choose generators for $\pi_1(M \setminus \sigma)$, and take loops $S' = \set{\gamma'_1, \dots, \gamma'_\ell}$ of minimal length representing the generators in the $1$-skeleton of $M \setminus \sigma$ of $M$.
	Since $M$ is flag, each cycle must be of length $>3$, and since $M$ has no squares each cycle must be of length $>4$.

	We may assume the loops are embedded, as if some loop $\gamma'_i$ has self-intersections we may divide it into smaller loops, and then replace $\gamma'_i$ with the union of these smaller loops in $S'$ (this corresponds to changing the generating set of $\pi_1(M \setminus \sigma)$. For the same reason, we may assume that the cycles are full: if a cycle $\gamma'_i$ is not full, then there are two vertex which are adjacent in $M$ but not in the cycle, so we may find a shortcut and cut $\gamma'_i$ into smaller cycles. Note that at every step the quantity $\sum_{\gamma' \in S'} 3^{\abs{\gamma'}}$ decreases, so this procedure will eventually terminate.

	Now the conclusion follows as the map induced map $\pi_1(M \setminus  \sigma ) \to \pi_1(M' \setminus \sigma) $ is surjective, and that $M \setminus \sigma$ is a full subcomplex of $M' \setminus \sigma$, so the cycles remain full. Note that some of the cycles may be homotopically trivial in $M' \setminus \sigma$, in which case we may safely remove them.
\end{proof}

We are now in a position to prove \Cref{coning-procedure}.
\begin{proof}[Proof of \Cref{coning-procedure}]
	Given $M$, choose full embedded cycles $\gamma_1, \dots, \gamma_k$ in $M$ generating $\pi_1(M)$ as a normal subgroup, as per \cref{full-embedded-generators} (applied with $\sigma$ being empty and no cones). Let $M'$ be the simplicial complex obtained by coning on each $\gamma_i$ twice.

	Now enumerate the simplices $s_1, \dots, s_\ell$ of $M'$. Define $M'_0=M'$. Now for every $i$, find full embedded cycles in $M \setminus \sigma_i \subseteq M'_{i-1} \setminus \sigma $ that generate $\pi_1(M'_{i-1})$ as a normal subgroup, by \cref{full-embedded-generators}. Now let $M'_i$ be obtained from $M'_{i-1}$ by coning these cycles. Let $N = M'_\ell$.

	We claim that $N$ satisfies the hypotheses.

	Since $N$ is obtained from $M$ by attaching some cones, \cref{cone} ensures that $M$ is full in $N$, $N$ is flag no square, and $M$ and $N$ define Coxeter groups of the same $\vcd$.

	We now check that $N$ has no $1$-disconnecting simplices. By construction, $N$ is simply connected. Let $\sigma$ be a simplex of $N$. If it intersects any added cone $C$ then as above $N \setminus \sigma$ is homotopic to $N \setminus (\interior(C) \cup \sigma)$. %
	Let the cones it intersects be $C_1 \dots C_a$. Then $N \setminus \sigma$ is homotopic to $N \setminus (\bigcup_{i=1}^a C_i \cup \sigma)$. But this latter complex is $M\setminus (M \cap \sigma)$ with some cones glued on. By construction, there were some glued on cones which killed $\pi_1(M\setminus M \cap \sigma)$. These have to be disjoint from $\sigma$, so they remain glued on and hence $N \setminus \sigma$ is simply-connected.
\end{proof}
\begin{remark}
	In the previous procedure, we may also simply cone every full cycle twice, regardless on whether it is homotopically trivial or not, but the given procedure gives finer control for potential future applications.
\end{remark}
\begin{remark}
	It is very easy to produce \fns complexes with no 0-disconnecting simplices as input for \cref{coning-procedure}: given a \fns simplicial complex $M$, enumerate all 0-disconnecting simplices as in the proof of \cref{coning-procedure} and add sufficiently long paths connecting the resulting path components. This preserves the same properties listed in \cref{cone}.
\end{remark}

\section{Proofs of the main results}\label{sec:theorems}
In this section we prove the results stated in the introduction.
\begin{proof}[Proof of \cref{main:fp-fibring-qi}]
	For every $d \geq 3$, \cite{LMSSW} show the existence of \fns simplicial complexes with no 0-disconnecting simplices that define a hyperbolic RACG of $\vcd =d$. Applying \cref{coning-procedure} gives rise to \fns simplicial complexes with no 1-disconnecting simplices of $\vcd =d$. By \cref{verythick}, the corresponding enlargement virtually fibres with finitely presented kernel. This already yields \Cref{main:fp-fibring}.

	For $l \geq 5$, the graph product defined on a cycle of length $l$ with vertex groups $(\mathbb{Z}/2\mathbb{Z})^N$ is a cocompact lattice $G_{N,l}$ in a Bourdon building \cite{C}. See \cite{B} for the definition of a Bourdon building. For fixed $l$, the conformal dimension of the boundary of the building, and hence that of the group $G_{N,l}$, goes to infinity with $N$. We note that although Bourdon originally considers graph products with vertex groups $\mathbb{Z}/2^N\mathbb{Z}$, these are commensurable to the graph products with vertex groups $(\mathbb{Z}/2\mathbb{Z})^N$ \cites{Kim12,LP, JS01}, hence have the same conformal dimension.

	We may assume that the input complex $X$ contains an $l$-cycle for some $l \geq 5$ (c.f. \cite[Remark 1.1]{LMSSW}). By \cref{verythick} we can choose an enlargement $\enlargement[\pi_N] X$ so that the thickened complex defines a word-hyperbolic Coxeter group $G$ that contains $G_{N,l}$ as a special subgroup for all $N$ sufficiently large. Since special subgroups are convex, by monotonicity this gives a lower bound on the conformal dimension of $G$. Letting $N$ tend to infinity gives infinitely many different values for the conformal dimension of $G$, so no two are quasi-isometric.
\end{proof}
\begin{remark}
	The argument about conformal dimension is identical to that in \cite{CDSS}. In their preprint they state that it is unclear how to make the argument about conformal dimension compatible with the thickening procedures of \cite{Osajda} and \cite{LMSSW}, but it appears to be compatible with our enlargement procedure.
\end{remark}

\subsection{Euler characteristic}\label{sec:trash}%
To prove the corollaries, we need a way to compute the Euler characteristic of the groups we produce above. To compute the Euler characteristic of a right-angled Coxeter group, we can apply the following formula. In the following sums, the empty simplex is always included.

\begin{theorem}[\cite{Chiswell}]\label{euler-char-racg}
	Let $\scx$ be a flag simplicial complex, and let $W$ be the \RACG defined by $\scx$. Then
	\[
		\chi(W) = \sum_{\sigma \in \scx} \left(-\frac 12\right)^{\dim \sigma + 1}.
	\]
\end{theorem}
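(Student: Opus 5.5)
The statement is Chiswell's formula, so I would prove it by computing the orbifold Euler characteristic of $W$ from its action on the Davis complex $X$. The plan has three steps.

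First, recall the basic facts about $W$. It is virtually torsion-free, with torsion-free subgroup $G'=\ker(W\to(\Z/2\Z)^{V})$ of index $2^{|V|}$; here $V$ is the vertex set of $\scx$. The rational Euler characteristic is then defined by $\chi(W)=\chi(G')/[W:G']$. The group $G'$ acts freely and cocompactly on the contractible complex $X$, so $G'\backslash X$ is a finite $K(G',1)$, and $\chi(G')$ is the alternating count of its cells.

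Second, count the cells of $G'\backslash X$. By the description recalled in the preliminaries, the cubes of $X$ correspond to cosets $wW_\sigma$, where $\sigma$ ranges over the simplices of $\scx$ (the empty simplex included) and $W_\sigma\cong(\Z/2\Z)^{\dim\sigma+1}$ is the special subgroup spanned by the vertices of $\sigma$. The cube attached to $wW_\sigma$ has dimension $\dim\sigma+1$. Cubes of a fixed type $\sigma$ form a single $W$-orbit, and the stabiliser of the cube $W_\sigma$ is $W_\sigma$ itself. The map $W_\sigma\to W/G'$ is injective, so $G'$ acts freely on these cubes, and the number of $G'$-orbits of cubes of type $\sigma$ is
\[
[W:G']/|W_\sigma| = 2^{|V|}/2^{\dim\sigma+1}.
\]

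Third, sum over types and divide by the index:
\[
\chi(W)=\frac{1}{2^{|V|}}\sum_{\sigma}(-1)^{\dim\sigma+1}\frac{2^{|V|}}{2^{\dim\sigma+1}}=\sum_\sigma\left(-\frac12\right)^{\dim\sigma+1}.
\]
The empty simplex gives the term $1$, which counts the vertices of $X$, i.e.\ the elements of $W$.

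I expect the main obstacle to be the second step: justifying carefully that the cells of the Davis complex are exactly the cosets of spherical special subgroups, which in the right-angled case are the simplices of the flag complex $\scx$, and that the stabilisers are as claimed. I would take this from the standard description in \cite{Davisbook}. Alternatively, one could use Brown's formula $\chi(W)=\sum_\sigma(-1)^{\dim\sigma+1}/|W_\sigma|$ for the proper cocompact action on $X$ directly, which avoids passing to $G'$.
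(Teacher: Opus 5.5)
The paper does not prove this statement; it is quoted from Chiswell and used as a black box. Later only its sign matters, via $\chi(G)=[W:G]\,\chi(W)$ for a torsion-free finite-index subgroup $G$.

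Your argument is correct. It is the right-angled case of the standard formula $\chi(W)=\sum_T(-1)^{|T|}/|W_T|$ over spherical subsets, obtained from the cocompact action on the Davis complex. The key points check out:
\begin{itemize}
\item $W_\sigma\cap G'=1$ shows that $G'$ acts freely on cubes, hence on points, so $G'\backslash X$ is a finite $K(G',1)$.
\item The same fact gives exactly $2^{|V|}/2^{\dim\sigma+1}$ orbits of cubes of type $\sigma$.
\item Flagness is precisely what identifies spherical subsets with simplices of $M$.
\end{itemize}

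A genuinely different route is induction on the number of vertices, and it also yields Chiswell's general formula $\chi=\sum_\sigma\prod_{v\in\sigma}(\chi(G_v)-1)$ for graph products. For a vertex $v$ one has $W_M=W_{M\setminus v}*_{W_{\mathrm{lk}(v)}}W_{\mathrm{st}(v)}$ with $W_{\mathrm{st}(v)}\cong\mathbb{Z}/2\mathbb{Z}\times W_{\mathrm{lk}(v)}$. Additivity of $\chi$ over amalgams then gives $\chi(W_M)=\chi(W_{M\setminus v})-\tfrac12\chi(W_{\mathrm{lk}(v)})$. This matches splitting the sum into simplices avoiding $v$ and simplices $v*\tau$, with $\tau$ a possibly empty simplex of $\mathrm{lk}(v)$.

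That route is purely algebraic and works for arbitrary vertex groups. Yours needs the cell structure of $X$, but it computes the Euler characteristic of an honest cube complex. The identical count for the kernel of any sparse colouring $W\to(\mathbb{Z}/2\mathbb{Z})^c$ gives $\chi(X')=2^c\chi(W)$ for the very complex $X'$ that carries the paper's Morse function.
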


From this we get the following formula for the Euler characteristic for the group defined by the enlargement.

\newcommand{\thickness}[1]{\thicksize(#1)}
\begin{proposition} \label{euler-char-thickening}
	Let $\scx$ be a flag simplicial complex, and consider an enlargement $\enlargement[\alpha]\scx$. For each vertex $v \in \scx$, denote by $\thickness v \in \NN$  the number of vertices above $v$, that is, the cardinality of $\alpha^{-1}(v)$. Then the Coxeter group $W_{\enlargement[\alpha]\scx}$ satisfies
	\[
		\chi(W_{\enlargement[\alpha]\scx}) = \sum_{\sigma \in \scx} \prod_{v \in \sigma} \left(\frac 1{2^{\thickness v}}-1\right).
	\]
\end{proposition}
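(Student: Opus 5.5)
We apply \cref{euler-char-racg} to the flag complex $\enlargement[\alpha]\scx$ and then group its simplices according to their image under $\bar\alpha$. First we note that $\enlargement[\alpha]\scx$ is flag whenever $\scx$ is: if vertices $u_0,\dots,u_n$ are pairwise adjacent, then the images $\alpha(u_i)$ are pairwise equal or adjacent. Hence $\{\alpha(u_0),\dots,\alpha(u_n)\}$ spans a simplex of $\scx$ by flagness, so the $u_i$ span a simplex. This justifies using Chiswell's formula, which gives
\[
	\chi(W_{\enlargement[\alpha]\scx}) = \sum_{\tau \in \enlargement[\alpha]\scx} \left(-\frac12\right)^{\dim\tau+1}.
\]
Here the empty simplex is included and contributes $1$.

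Next we partition the simplices $\tau$ of the enlargement by their image simplex $\sigma=\bar\alpha(\tau)\in\scx$; the empty simplex maps to the empty simplex. By the definition of the enlargement, a set of vertices is a simplex exactly when its image is a simplex of $\scx$. So the simplices lying over a fixed $\sigma$ are precisely the vertex sets $\tau=\bigsqcup_{v\in\sigma} A_v$, where each $A_v\subseteq\alpha^{-1}(v)$ is non-empty. The term $(-\frac12)^{\dim\tau+1}=(-\frac12)^{|\tau|}$ is multiplicative over this decomposition, so the contribution of $\sigma$ factors as
\[
	\prod_{v\in\sigma}\ \sum_{\varnothing\neq A\subseteq \alpha^{-1}(v)} \left(-\frac12\right)^{|A|}
	= \prod_{v\in\sigma}\left(\left(1-\frac12\right)^{\thickness v}-1\right)
	= \prod_{v\in\sigma}\left(\frac1{2^{\thickness v}}-1\right),
\]
by the binomial theorem. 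For $\sigma=\varnothing$ the empty product equals $1$, which matches the contribution of the empty simplex. Summing over $\sigma\in\scx$ gives the stated formula.

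No step is a serious obstacle. The only point needing care is the bijection between simplices over $\sigma$ and tuples of non-empty subsets of the fibres, and this is exactly the defining property of the enlargement.
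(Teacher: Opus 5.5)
Your proof is correct and follows the paper's argument essentially verbatim. You apply Chiswell's formula to the enlargement, group its simplices by their image $\sigma \in M$, and factor the fibre sum via the binomial theorem; the paper counts the simplices over $\sigma$ with $\prod_j \binom{h(v_j)}{i_j}$ instead of summing over non-empty subsets of the fibres, which is the same computation. Your explicit check that the enlargement is flag, which is needed to apply Chiswell's formula to it, is a step the paper leaves implicit.
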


\begin{proof}
	We rewrite the formula in \cref{euler-char-racg} as
	\[
		\chi(W_{\enlargement[\alpha]\scx}) = \sum_{\tau \in \enlargement\scx} \prod_{v \in \tau} \left(-\frac12\right) = \sum_{\sigma \in \scx} \sum_{\alpha(\tau)=\sigma} \prod_{v \in \tau} \left(-\frac12\right).
	\]
	Now fix a simplex $\sigma \in \scx$, with vertices $v_1, \dots, v_k$. For every $k$-tuple of integers $i_1, \dots, i_k$ with $1 \leq i_j \leq \thickness{v_j}$, there are $\prod_{j=1}^k \binom{\thickness{v_j}}{i_j}$ simplices $\tau \in \enlargement\scx$ with $\alpha(\tau)=\sigma$ and exactly $i_j$ many vertices above $v_j$ for each $j$. So we can write
	\begin{align*}
		\sum_{\alpha(\tau)=\sigma} \prod_{v \in \tau} \left(-\frac12\right) & = \sum_{i_1 = 1}^{\thickness{v_1}} \dots \sum_{i_k = 1}^{\thickness{v_k}} \left(\prod_{j=1}^k \binom{\thickness{v_j}}{i_j} \right)\cdot \left(-\frac12\right)^{i_1 + \dots + i_k} \\
		                                                                    & = \prod_{j=1}^k \left(\sum_{i_j=1}^{\thickness{v_j}}\binom{\thickness{v_j}}{i_j} \cdot  \left(-\frac 12\right)^{i_j}\right)                                                       \\
		                                                                    & = \prod_{j=1}^k \left(\left(1-\frac12\right)^{\thickness {v_j}}-1\right)                                                                                                          \\
		                                                                    & = \prod_{j=1}^k \left(\frac1{2^{\thickness {v_j}}}-1\right).
	\end{align*}
	The result now follows.
\end{proof}

\subsection{Exotic finiteness properties}

In this subsection we prove the results concerning exotic finiteness properties of the kernel.

\begin{proof}[Proof of \cref{main:fp-fibring-notFn}]
	As in the proof of \cref{main:fp-fibring-qi}, we start with a \fns simplicial complex $M$ coming from \cite{LMSSW}, collar-cone cycles of $M$, and finish with the final enlargement. We show that it is possible to obtain a group with non-zero Euler characteristic by simply extending the coning phase of our construction.

	Let $M$ be a \fns simplicial complex. Adding an $m$-simplex to $M$ changes the Euler characteristic of the RACG associated to $\enlargement[\pi_\thicksize] M$ by adding a term that, when $\thicksize$ is large, can be approximated by $(-1)^{m+1}$ by \cref{euler-char-thickening}.
	The procedure of collar-coning an $n$-cycle adds $n+1$ vertices, $4n$ 1-simplices, and $3n$ 2-simplices; therefore, collar-coning one more $n$-cycle of $M$ decreases the Euler characteristic of the constructed group by approximately 1.

	Thus, starting with an arbitrary \fns complex $M$ such that $\enlargement[\pi_\thicksize] M$ defines a hyperbolic \RACG of virtual cohomological dimension $n$ that fibres with kernel of type $\finitetype[2]$, we may cone any loop sufficiently many times so that, for large $\thicksize$, the \RACG $G$ associated to $\enlargement[\pi_\thicksize] M$ has negative Euler characteristic. The group $G$ still virtually fibres with kernel of type $\finitetype[2]$, as the hypothesis of \cref{prop:alpha-thickening-k-fibres} are still satisfied.

	All \RACGs are virtually RFRS, so by \cite{F24} this implies that the $L^2$-Betti numbers $b_0^{(2)}(G)$, $b_1^{(2)}(G)$, and $b_2^{(2)}(G)$ vanish. It is known that the $L^2$-Betti numbers vanish above the cohomological dimension, and the alternating sum of the $L^2$-Betti numbers computes the Euler characteristic $\chi(G)$. Since $\chi(G) \neq 0$, there must be some $3 \leq k \leq d$ such that $b_k^{(2)}(G)$ is positive. Again by \cite{F24} this implies that the kernel of the fibring cannot be of type $FP_k(\mathbb{Q})$, hence not of type $F_n$.
	We obtain infinitely many quasi-isometry classes as in the proof of \cref{main:fp-fibring-qi}.

	Let us now assume that $n$ is even. If the kernel were of type $\finitetype[n-1]$, then the only $L^2$-Betti number that could be non-vanishing would be $b_n^{(2)}$, which contributes with positive sign to the Euler characteristic since $n$ is even. This contradicts $\chi(G)$ being (strictly) negative.
\end{proof}
\section{Further directions} \label{sec:next}

Even though it is likely that almost all our groups already fail to satisfy property $F_3$, it might be possible to generalise our collar-coning technique to ensure the absence of $2$-disconnecting simplices, therefore obtaining a different family of groups fibring with kernels having property $F_3$.

Concretely, we may view the collar-cone as a truncation of a flag-no-square triangulation of $S^2$ by removing one of the vertices (see \cref{rmk:Lobell} and \cref{fig:truncated-icosahedron}).
One might attempt to cone generators of $H_2$ with a truncation of a flag-no-square triangulation of $S^3$ to ensure there are no 2-disconnecting simplices. This construction would involve a much higher combinatorial complexity, as this would require us to cone triangulations of $S^2$, which are more complicated than triangulations of $S^1$. While in principle it is possible to envision pushing a similar strategy to even higher finiteness properties, the fact that there are no flag-no-square triangulations of $S^n$ for $n\geq 4$ \cite{PS09} implies that this would likely require using objects that are not topologically a disc to perform the filling. This added topological complexity, on top of the combinatorial complexity, leads the authors to believe this procedure is out of reach with the current technology. %
\medskip

We have shown above that in fact there are many fibrations of hyperbolic groups where the kernel is finitely presented. However, as mentioned in the introduction, the ultimate goal is to produce fibrations with kernel of type $\finitetype$. One necessary condition to get type $\finitetype$ kernel is that the group must have vanishing Euler characteristic. It would be interesting to construct examples where the Euler characteristic is zero, especially in view of the difficulties with generalising the coning construction to higher dimensions as discussed above; however it seems unlikely that the groups constructed in \cite{LMSSW} have zero Euler characteristic, and the coning procedure does not give fine enough control on the Euler characteristic to construct such an example.

\bibliographystyle{alpha}
\bibliography{references}
\end{document}